\renewcommand{\baselinestretch}{1.2}
\newtheorem{theorem}{Theorem}[section]
\newtheorem{proposition}[theorem]{Proposition}
\newtheorem{definition}[theorem]{Definition}
\newtheorem{remark}[theorem]{Remark}
\renewcommand{\thefootnote}
\begin{document}
\renewcommand{\baselinestretch}{1.2}

\title {Finding the spectral radius of a nonnegative irreducible symmetric tensor via DC programming\thanks{
This work was supported by the the National Natural Science Foundation of China [grand number 12271187]}}

\author{Xueli Bai\thanks{School of Mathematics and Statistics, Guangdong University of Foreign Studies, Guangzhou 510006, People's Republic of China.(202210047@oamail.gdufs.edu.cn).}
\and Dong-Hui Li\thanks{School of Mathematical Sciences, South China Normal University, Guangzhou 510631, People's Republic of China.(lidonghui@m.scnu.edu.cn, 2018021699@m.scnu.edu.cn).}
\and Lei Wu\thanks{School of Mathematics and Statistics, Jiangxi Normal University, Nanchang 330022, People's Republic of China.
(wulei@jxnu.edu.cn). This author was supported by National Natural Science Foundation of China [grant number 12161046].}
\and Jiefeng Xu\footnotemark[3]
}

\maketitle

\begin{abstract}
The Perron-Frobenius theorem says that the spectral radius of an irreducible nonnegative tensor is the unique positive eigenvalue corresponding to a positive eigenvector. With this in mind, the purpose of this paper is to find the spectral radius and its corresponding positive eigenvector of an irreducible nonnegative symmetric tensor. By transferring the eigenvalue problem into an equivalent problem of minimizing a concave function on a closed convex set, which is typically a DC (difference of convex functions) programming, we derive a simpler and cheaper iterative method. The proposed method is well-defined. Furthermore, we show that both sequences of the eigenvalue estimates and the eigenvector evaluations generated by the method $Q$-linearly converge to the spectral radius and its corresponding eigenvector, respectively. To accelerate the method, we introduce a line search technique. The improved method retains the same convergence property as the original version. Preliminary numerical results show that the improved method performs quite well.
\end{abstract}

{\bf Keywords.} \  Irreducible nonnegative tensor, Spectral radius, DC programming, $Q$-linear convergence.

{\bf  Mathematics Subject Classification (2020).} \ 15A18, 15A69, 90C90

\section{Introduction}
\setcounter{equation}{0}

Let $\mathbb{R}$ be the real field. For any positive integers $m$ and $n$, a real $m$-th order $n$-dimensional tensor
${\cal A}$ is an $m$-way array, which is defined as 
$$
{\cal A}=(a_{i_1i_2\ldots i_m})~\mbox{ with }~a_{i_1i_2\ldots i_m}\in\mathbb{R},~ \forall i_j\in[n], j\in[m],
$$
where $[n]:=\{1,2,\cdots,n\}$. For simplicity, the set of all real $m$-th order $n$-dimensional tensors is denoted by $\mathbb{R}^{[m,n]}$. A tensor is symmetric if its elements are invariant under any permutation of their indices. The set of all real $m$-th order $n$-dimensional symmetric tensors is given by $\mathbb{S}^{[m,n]}$. For a tensor ${\cal A}=(a_{i_1\ldots i_m})\in\mathbb{R}^{[m,n]}$ and a vector $x=(x_1,\ldots,x_n)^T\in\mathbb{R}^n$, recall that ${\cal A}x^{m-1}\in\mathbb{R}^n$ whose $i$-th element is defined as
$$
({\cal A}x^{m-1})_i:=\sum^n_{i_2,\ldots,i_m=1}a_{ii_2\ldots i_m}x_{i_2}\cdots x_{i_m},\quad \forall i\in [n],
$$
the scalar ${\cal A}x^m\in\mathbb{R}$ is defined by
$$
{\cal A}x^m:=\langle x,{\cal A}x^{m-1}\rangle=\sum^n_{i_1,\ldots,i_m=1}a_{i_1\ldots i_m}x_{i_1}\cdots x_{i_m},
$$
and ${\cal A}x^{m-2}\in\mathbb{R}^{n\times n}$, whose $(i,j)$-th element is given by
$$
({\cal A}x^{m-2})_{ij}:=\sum^n_{i_3,\ldots,i_m=1}a_{iji_3\ldots i_m}x_{i_3}\cdots x_{i_m},\quad \forall i,j\in [n].
$$
In the above definition, for any two vectors $x,y\in \mathbb R^n$, $\langle x,\,y \rangle$ is their inner produce, i.e.,
$\langle x,\,y \rangle:=\sum_{i=1}^nx_iy_i$. It is well-known that for a symmetric tensor ${\cal A}\in \mathbb{S}^{[m,n]}$, the gradient and the Hessian of the homogeneous polynomial ${\cal A}x^m$ are $m{\cal A}x^{m-1}$ and $m(m-1){\cal A}x^{m-2}$, respectively.
\begin{definition}\label{H}{\rm(\cite{Lim05,Qi05})}
Let ${\cal A}\in\mathbb{R}^{[m,n]}$. We say that $\lambda\in\mathbb{R}$ is an $H$-eigenvalue
and $x=(x_1,\ldots,x_n)^T\in\mathbb{R}^n$ is the corresponding $H$-eigenvector of ${\cal A}$ if
\begin{equation}\label{H-eigen}
{\cal A}x^{m-1}=\lambda x^{[m-1]}\quad \mbox{and}\quad \sum_{i=1}^m x_i^m=1,
\end{equation}
where $x^{[m-1]}:=(x_1^{m-1},\ldots,x_2^{m-2})^T\in\mathbb{R}^n$. And the spectral radius $\rho({\cal A})$ of
${\cal A}$ is the maximum modulus of its $H$-eigenvalues, which is given by
\begin{equation}\label{H-spe}
\rho({\cal A}):=\max\{|\lambda|\mid\lambda\mbox{ is an $H$-eigenvalue of }{\cal A}\}.
\end{equation}
\end{definition}
Now we recall three kinds of significant structured tensors.
\begin{definition}\label{stru-t}{\rm(\cite{CPZ08})}
A tensor ${\cal A}=(a_{i_1\ldots i_m})\in\mathbb{R}^{[m,n]}$ is called
\begin{description}
  \item[(i)] nonnegative, if all of its elements are nonnegative;
  \item[(ii)] reducible, if there is a nonempty proper index set $\mathbb{I} \subset [n]$ such that
      $$
      a_{ii_2\ldots i_m}=0,\quad \forall i\in I,\; \forall i_2,\ldots, i_m\not\in \mathbb{I}.
      $$
\end{description}
If ${\cal A}$ is not reducible, then, it is called irreducible.
\end{definition}

Irreducible nonnegative tensors form an important class of tensors, which possesses many nice properties. One of the most important properties is given by Chang et al. in \cite{CPZ08} by extending the well-known Perron-Frobenius theorem for the nonnegative matrix to the tensor case.
\begin{theorem} {\rm(\cite{CPZ08})}\label{PF}
If ${\cal A}\in\mathbb{R}^{[m,n]}$ is an irreducible nonnegative tensor, then, $\rho({\cal A})$ is a positive $H$-eigenvalue $\bar{\lambda}$ with a positive $H$-eigenvector $\bar{x}$. Furthermore, $\bar{\lambda}$ is the unique $H$-eigenvalue of ${\cal A}$ with a nonnegative $H$-eigenvector, and $\bar{x}$ is the unique nonnegative $H$-eigenvector associated with $\bar{\lambda}$, up to a multiplicative constant.
\end{theorem}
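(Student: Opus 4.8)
The plan is to prove the three assertions in the order they are stated: first the existence of a nonnegative $H$-eigenpair $(\bar\lambda,\bar x)$ with $\bar\lambda\ge 0$, then the upgrade to a strictly positive pair together with the identification $\bar\lambda=\rho(\mathcal A)$, and finally uniqueness up to scaling. The engine for existence is a fixed-point theorem, while positivity and both uniqueness statements are all driven by nonnegativity combined with irreducibility through a single comparison estimate. For existence I would work on the simplex $\Delta=\{x\in\mathbb R^n:x_i\ge 0,\ \sum_i x_i=1\}$ and consider the map $g(x)_i=\big((\mathcal A x^{m-1})_i\big)^{1/(m-1)}$, which is positively homogeneous of degree one, followed by the normalization $T(x)=g(x)/\sum_j g(x)_j$. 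Brouwer's theorem then gives $x^\star\in\Delta$ with $g(x^\star)=\mu x^\star$ for some $\mu\ge 0$, i.e. $\mathcal A (x^\star)^{m-1}=\bar\lambda (x^\star)^{[m-1]}$ with $\bar\lambda=\mu^{m-1}\ge 0$; rescaling to $\sum_i x_i^m=1$ produces an $H$-eigenpair. The delicate point is that $T$ need not be defined where $\sum_j g(x)_j=0$, so I would first prove the claim for the strictly positive perturbation $\mathcal A+\varepsilon\mathcal E$ (with $\mathcal E$ the all-ones tensor), where $T$ is manifestly continuous and fixed points lie in the interior, and then let $\varepsilon\downarrow 0$ using compactness of $\Delta$.

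For positivity, set $\mathbb J=\{i:\bar x_i=0\}$ and suppose $\mathbb J\neq\emptyset$. For $i\in\mathbb J$ the eigen-equation forces $\sum_{i_2,\ldots,i_m}a_{ii_2\ldots i_m}\bar x_{i_2}\cdots\bar x_{i_m}=0$, and since every summand is nonnegative this gives $a_{ii_2\ldots i_m}=0$ whenever $i_2,\ldots,i_m\notin\mathbb J$ — which is exactly the reducibility condition of Definition~\ref{stru-t} with index set $\mathbb J$ (nonempty by assumption, proper since $\bar x\neq 0$), contradicting irreducibility. Hence $\bar x>0$. Then $\bar\lambda>0$, because $\bar\lambda=0$ would force $\mathcal A\bar x^{m-1}=0$ with $\bar x>0$, hence $\mathcal A=0$, which is reducible.

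The central tool for the remaining claims is the following comparison estimate: if $u\ge 0$, $u\neq 0$, satisfies $\mathcal A u^{m-1}\ge c\,u^{[m-1]}$ componentwise, then $c\le\bar\lambda$. To see this, let $s=\max_i u_i/\bar x_i$, attained at some index $k$, so that $u\le s\,\bar x$ componentwise with $u_k=s\,\bar x_k>0$. Nonnegativity of the entries gives $(\mathcal A u^{m-1})_k\le s^{m-1}(\mathcal A\bar x^{m-1})_k=s^{m-1}\bar\lambda\bar x_k^{m-1}=\bar\lambda u_k^{m-1}$, while the hypothesis gives $(\mathcal A u^{m-1})_k\ge c\,u_k^{m-1}$; dividing by $u_k^{m-1}>0$ yields $c\le\bar\lambda$. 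Applying this to any (possibly complex) $H$-eigenpair $(\nu,y)$, nonnegativity of $\mathcal A$ gives $|\nu|\,|y_i|^{m-1}=|(\mathcal A y^{m-1})_i|\le(\mathcal A|y|^{m-1})_i$, i.e. $\mathcal A|y|^{m-1}\ge|\nu|\,|y|^{[m-1]}$, so $|\nu|\le\bar\lambda$; since $\bar\lambda$ is itself an $H$-eigenvalue this shows $\bar\lambda=\rho(\mathcal A)$.

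For uniqueness, let $(\nu,y)$ be a second nonnegative $H$-eigenpair; the positivity argument above applies verbatim to give $y>0$, $\nu>0$. The comparison estimate applied with $u=y$, $c=\nu$ gives $\nu\le\bar\lambda$, and by symmetry (exchanging the roles of the two pairs) $\bar\lambda\le\nu$, so $\nu=\bar\lambda$. With $s=\max_i y_i/\bar x_i$ attained at $k$, equality now holds throughout the chain above, forcing $y_{i_2}\cdots y_{i_m}=s^{m-1}\bar x_{i_2}\cdots\bar x_{i_m}$ for every index tuple with $a_{ki_2\ldots i_m}>0$, hence $y_i=s\,\bar x_i$ for all $i$ appearing in such a tuple; irreducibility, read as a connectivity/strong-accessibility condition, then lets me propagate the equality $y_i=s\,\bar x_i$ to every $i\in[n]$, so $y=s\,\bar x$. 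I expect the genuine technical heart to be exactly this last propagation step — converting a single coordinatewise equality into equality everywhere by exploiting irreducibility — together with making the boundary-handling in the existence argument (the perturbation-and-limit passage) fully rigorous.
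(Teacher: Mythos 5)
The paper does not prove this theorem at all: it is imported verbatim from \cite{CPZ08}, so there is no internal proof to compare against, and your proposal has to be judged as a reconstruction of the cited result. On that basis it is essentially correct, and it follows the same route as the classical argument: Brouwer fixed-point existence on the simplex, positivity of the eigenvector via irreducibility, a Collatz--Wielandt-type comparison bound, and equality analysis in that bound for uniqueness. Your comparison lemma and its two applications (maximality of $\bar\lambda$ over all $H$-eigenvalues, hence $\bar\lambda=\rho({\cal A})$, and $\nu=\bar\lambda$ for any nonnegative eigenpair) are stated and proved correctly. One remark on existence: the $\varepsilon$-perturbation, while valid, is not actually needed when $\cal A$ is irreducible, because if ${\cal A}x^{m-1}=0$ for some $x\in\Delta$ then either $x>0$, which forces ${\cal A}=0$, or the nonempty proper set $\{i: x_i=0\}$ witnesses reducibility; either way $\cal A$ would be reducible, so the normalizing denominator never vanishes on $\Delta$ and $T$ is continuous there.

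Two details should be made explicit to close the argument. First, before applying your positivity argument ``verbatim'' to a second nonnegative eigenpair $(\nu,y)$ you need $\nu\ge 0$; this is one line: $\nu y_i^{m-1}=({\cal A}y^{m-1})_i\ge 0$ and $y_i^{m-1}>0$ at any index with $y_i>0$. Second, the propagation step, which you correctly identify as the heart of uniqueness, needs the observation that membership in $S=\{i:\, y_i=s\bar x_i\}$ is self-reproducing: if $i\in S$, then $i$ also attains the maximum defining $s$, so your equality analysis re-runs at row $i$ and shows that $a_{ii_2\ldots i_m}=0$ for every tuple containing an index outside $S$. In particular $a_{ii_2\ldots i_m}=0$ for all $i\in S$ and all $i_2,\ldots,i_m\notin S$, which is precisely the reducibility condition of Definition \ref{stru-t} with $\mathbb I=S$; since $k\in S$, the set $S$ is nonempty, so irreducibility forces $S=[n]$, i.e.\ $y=s\bar x$. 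With these two points supplied, your proof is complete and matches, in both structure and substance, the argument in the cited reference.
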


The tensor eigenvalue problem appears in various practical problems, such as the spectral hypergraph theory \cite{HQX15},
the higher-order Markov chains \cite{LN14}, the quantum information \cite{NQB14}, the signal processing \cite{QT03} and the magnetic resonance imaging\cite{QWW08}, to name a few.
Due to the good properties mentioned in Theorem \ref{PF}, the largest $H$-eigenvalue problem of the irreducible nonnegative tensor has received much attention both in theory(see e.g., \cite{DYB20,GTH19,QCC18,QL17,YY10,YY11,WWW20}) and algorithms. There have been developed many classes of valid methods including the first-order method, Newton's method, the homotopy continuation method, and so on(see e.g., \cite{CHHZ19,LGL16,LC19,NQ15,YN17,ZB21}).

Our purpose here is to design an efficient first-order method to find the unique largest positive $H$-eigenpair of an irreducible
nonnegative symmetric tensor. The studies on this topic have taken good progress. Lu and Pan \cite{LP16}
proposed a higher-order power method, which is a generalization of the power method for finding tensor $Z$-eigenpairs proposed by
Kolda and Mayo in \cite{KM11}. The higher-order power method is a simple and effective method. Whereas, in general,
it guarantees to neither converge nor find the largest eigenvalue of the involved tensors. Kolda and Mayo in \cite{KM14} also presented a generalized eigenproblem adaptive power method by adding and choosing automatically a shift item. Another efficient
first-order algorithm is the so-called NQZ method proposed by Ng et al. \cite{NQZ09}, which is an extension of
the method specializing for eigenvalue problems of irreducible nonnegative matrices. However, the convergence of
this algorithm is still not guaranteed. To address this problem, Chang et al. \cite{CPZ11} showed the convergence of
NQZ method for primitive tensors and Liu et al. \cite{LZI10} presented the convergence of NQZ method by exploring the
relationship between primitive tensors and irreducible nonnegative tensors. The sequence of the eigenvalue estimates generated by NQZ
method is proved  by Zhang and Qi in \cite{ZQ11} to be Q-linearly convergent for essentially positive tensors. But so far, there is no first-order
method where the generated sequence of eigenvector estimates is $Q$-linearly convergent.

In this paper, we will derive a first-order method in which both sequences of eigenvalue estimates and eigenvector estimates are $Q$-linearly convergent. Moreover, the generated sequence of eigenvalue estimates is monotonically convergent. We first reformulate the target problem into an optimization system of minimizing a concave function on a closed bounded set, which is typically a DC programming. By making a deep analysis to the equivalent system, we derive a simpler and cheaper iterative method called power-like method to find the largest positive $H$-eigenvalue and its corresponding $H$-eigenpair of the involved tensor. Without using line search, we prove that both sequences of the eigenvalue and eigenvector estimates are globally and $Q$-linearly convergent. To speed up the method, we then introduce a line search and two BB steps, which leads to the fact that the improved methods still possess $Q$-linear convergence property. A mass of numerical experiments show the high efficiency of the improved methods.

The rest of this paper is organized as follows. In \cref{formu}, we reformulate the $H$-eigenvalue problem and investigate its nice properties. In \cref{method}, we propose a simple iterative method called power-like method for finding the spectral radius of the irreducible nonnegative symmetric tensor and its corresponding positive eigenvector. And the $Q$-linear convergence of the proposed method is provided in \cref{Qconv}. In order to improve the efficiency of the method, in \cref{improv}, by the use of a line search technique, we propose a improved power-like method and show its convergence. Numerical experiments are presented in \cref{numer} to indicate the efficiency of the methods. In \cref{conc}, we give the conclusion of this paper.

\section{Reformulated problem and its properties}\label{formu}
\setcounter{equation}{0}

In this section, we transfer the $H$-eigenvalue problem (\ref{H-eigen}) into an equivalent optimization problem
and investigate its nice properties.

Suppose   ${\cal A}\in\mathbb{S}^{[m,n]}$.
It is easy to see that finding an $H$-eigenpair of $\cal A$ is equivalent to finding a KKT point of the following  optimization problem
\begin{equation}\label{H-spectral-a}
\max_{x\in {\mathbb B}} \phi(x):={\cal A}x^m,
\end{equation}
where  ${\mathbb  B} =\{x\in \mathbb R^n\; |\; \sum_{i=1}^n x_i^m=1\}$ . Moreover, finding the largest $H$-eigenvalue of
$\cal A$ and its corresponding eigenvector is equivalent to finding the global solution of the above problem, which is
generally NP-hard. However, when $\cal A$ is irreducible and nonnegative, the problem becomes a lot easier. In this case,
the largest eigenvalue $\lambda^*$ together with its corresponding eigenvector $x^*$, is the unique solution to the optimization problem
\begin{equation}\label{H-spectral}
\max _{x\in {\mathbb  B}_+} \phi(x)= {\cal A}x^m
\end{equation}
with $\lambda^*={\cal A}(x^*)^m$, where
$$
{\mathbb  B}_+=\Big \{x\in \mathbb R_{++}^n\;\Big  |\;\sum_{i=1}^n x_i^m=1\Big \},
\;
\mathbb{R}^n_{++}:=\Big \{x\in\mathbb{R}^n\; \Big |\; x_i>0, \forall  i\in[n]\Big \}.
$$
Denote the Lagrangian function of the system (\ref{H-spectral-a}) as
\begin{equation}\label{def:Lx}
L(x,\lambda)=-{\cal A}x^m + \lambda \Big(\sum_{i=1}^m x_i^m-1\Big).
\end{equation}
The KKT condition of (\ref{H-spectral-a}) is
$$
\nabla _x L(x,\lambda)=0\quad\mbox{and}\quad \sum_{i=1}^n x_i^m=1.
$$
Obviously, $(\bar x,\bar \lambda)$ is a KKT point of the system (\ref{H-spectral-a}) if and only if it is an eigenpair of $\cal A$.
If $\cal A$ is irreducible and nonnegative, then a KKT point $(\bar x,\bar \lambda)$ of (\ref{H-spectral-a}) with $\bar x\in \mathbb R^n_{++}$ is the unique $H$-eigenpair of ${\cal A}$. What is more, it is also a KKT point of (\ref{H-spectral}).

In the remainder of this paper, we always suppose that ${\cal A}\in\mathbb{S}^{[m,n]}$ is irreducible nonnegative. We first rewrite the homogeneous polynomial function $\phi(x)$ in the form
$$
\phi(x)= {\cal A}x^m=\sum_{i=1}^K c _i  x_1^{a_{1i}}x_2^{a_{2i}}\cdots x_n^{a_{ni}},
$$
where $K$ is the number of the different terms in the summation and for each $i\in [K]$, $c_i> 0$
is the summation of all the coefficients in ${\cal A}x^m$ corresponding to the term $x_1^{a_{1i}}x_2^{a_{2i}}\cdots x_n^{a_{ni}}$.
Denote, for each $i\in[K]$, the nonnegative integer vector
$a_i=(a_{1i}, a_{2i}\ldots, a_{ni})^T\in\mathbb R^n$. 
The change of variable $y_i=\log  x_i$ with $x_i>0$ turns $ c_ix_1^{a_{1i}}x_2^{a_{2i}}\cdots x_n^{a_{ni}}$, the monomial function, into an exponential of an affine function, which results in
$$
\phi(e^{[y]})= \sum_{i=1 } ^K   e^{a_i^Ty+b_i},
$$
where for any $i\in[K]$, $b_i= \log {c_i}$ and $e^{[y]}=(e^{y_1},\ldots, e^{y_n})^T\in\mathbb  R^n$. Thus, we get the following equivalent system to problem (\ref{H-spectral}):
\begin{equation}\label{H-opt}\left \{ \begin{array}{rll}
\min & f(y):= -\log(\phi(e^{[y]}))= -\log \Big(\displaystyle \sum\limits_{i=1 } ^K   e^{a_i^Ty+b_i} \Big), \\
\mbox{s.t.} & \displaystyle \sum_{i=1}^n e^{my_i}-1=0.
\end{array}\right.
\end{equation}
Obviously, systems (\ref{H-opt}) and (\ref{H-eigen}) are completely equivalent in the sense that a positive vector $x$ is a solution to (\ref{H-eigen})
if and only if $y=\log x$ is a solution to (\ref{H-opt}). Throughout, for a positive vector $x$,
the meaning of $\log x$ is done by elements. From Perron-Frobenius theorem, we know that system (\ref{H-eigen})
has a unique positive solution $\bar x$. As a result, the optimization system (\ref{H-opt}) has a unique solution $\bar y=\log \bar x$.

It is clear that  $f(y)=-\log \phi(x(y))$ with $x(y)=e^{[y]}$. 
Then the gradient of this composite function $f(y)$ is given by
\begin{equation}\label{gradient}
\nabla f(y)=-\frac {\mbox{diag}\Big(e^{[y]} \Big) \nabla \phi(x(y))}{\phi(x(y))}
= -m\frac {{\cal A}x^{m-1}\circ x}{{\cal A}x^m},
\end{equation}
where ``$\circ $" denotes the Hardarmard product of two vectors. 
It is interesting to note that for any $y\in \mathbb R^n$,
\begin{equation}\label{const}
\sum_{i=1}^n \frac {\partial f(y)}{\partial y_i} =-m.
\end{equation}
The Lagrangian function of problem (\ref{H-opt}) is denoted by
$$
\bar L(y,\mu)= f(y) + \frac 1m  \mu \Big(\sum_{i=1}^n e^{my_i}-1 \Big),
$$
and the KKT condition is
$$\left \{\begin{array}{l}
\nabla f(y)+  \mu e^{[my]}=0,\\
\displaystyle\sum_{i=1}^n e^{my_i}-1=0.
\end{array}\right.
$$
This together with (\ref{const}) implies $\mu = m$, which is independent of $x$.
As a result, we claim that the solution $\bar y$ of (\ref{H-opt}) satisfies
\begin{equation}\label{kkt-H}
\nabla f(\bar y)+ me^{[m\bar y]}=0\quad\mbox{and}\quad \sum_{i=1}^n e^{m\bar y_i}-1=0,
\end{equation}
and the corresponding Lagrangian multiplier $\mu$ equals to $m$.
It is clear that $\bar y$ is also a solution of the problem 
\begin{equation}\label{def:Ly}
  \min_{y\in \mathbb{R}^{n}}L(y):=\bar L(y,m) = f(y)+ \Big(\sum_{i=1}^n e^{my_i}-1 \Big).  
\end{equation}

By letting $\bar x=e^{[\bar y]}$, we can obtain from (\ref{gradient}) and (\ref{kkt-H}) that
$$
0= -m\frac {{\cal A}\bar{x}^{m-1}\circ \bar{x}}{{\cal A}\bar{x}^m} +m \bar{x}^{[m]},
$$
which shows that $\bar x>0$ is an H-eigenvector of $\cal A$ corresponding to the eigenvalue $\bar\lambda =  {\cal A}\bar x^m>0$. Now, we summarize the above arguments in the following theorem.

\begin{theorem}\label{th:kkt-H}
If $\bar y$ is a KKT point of the optimization problem (\ref{H-opt}), then, $\bar x=e^{[\bar y]}$ is a positive eigenvector of $\cal A$ corresponding to the spectral radius $\bar\lambda = {\cal A}\bar x^m$. Conversely, if $(\bar x,\bar\lambda)$ is the unique positive eigenpair of $\cal A$, then, $\bar y=\log \bar x$ is the unique solution of (\ref{H-opt}), and the corresponding Lagrangian multiplier $\mu$ equals to $m$, the order of ${\cal A}$.
\end{theorem}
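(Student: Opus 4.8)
The statement packages the two directions of an equivalence between KKT points of (\ref{H-opt}) and positive eigenpairs of $\cal A$, so my plan is to prove each direction in turn, leaning on three ingredients already in hand: the explicit gradient formula (\ref{gradient}), the summation identity (\ref{const}), and the Perron--Frobenius theorem (\cref{PF}). Throughout I would exploit that $\bar x=e^{[\bar y]}$ is automatically strictly positive, so no sign issues arise when dividing by its components.

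For the forward direction, suppose $\bar y$ is a KKT point, so there is a multiplier $\mu$ with $\nabla f(\bar y)+\mu\,e^{[m\bar y]}=0$ and $\sum_{i=1}^n e^{m\bar y_i}=1$. The first thing I would observe is that the multiplier is forced: summing the $n$ scalar stationarity equations and invoking (\ref{const}) gives $-m+\mu\sum_i e^{m\bar y_i}=0$, and the constraint $\sum_i e^{m\bar y_i}=1$ then yields $\mu=m$, exactly the claimed value. Substituting this together with (\ref{gradient}) into the stationarity condition produces $-m\,({\cal A}\bar x^{m-1}\circ\bar x)/({\cal A}\bar x^m)+m\,\bar x^{[m]}=0$; dividing the $i$-th component by the positive number $\bar x_i$ turns this into ${\cal A}\bar x^{m-1}=\bar\lambda\,\bar x^{[m-1]}$ with $\bar\lambda={\cal A}\bar x^m>0$, while the constraint supplies $\sum_i\bar x_i^m=1$. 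Hence $\bar x>0$ is an $H$-eigenvector, and because it is strictly positive and $\cal A$ is irreducible nonnegative, \cref{PF} identifies its eigenvalue as the spectral radius, i.e. $\bar\lambda=\rho({\cal A})$.

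For the converse, let $(\bar x,\bar\lambda)$ be the unique positive eigenpair and set $\bar y=\log\bar x$. Feasibility $\sum_i e^{m\bar y_i}=\sum_i\bar x_i^m=1$ is immediate from (\ref{H-eigen}). I would then compute ${\cal A}\bar x^m=\langle\bar x,{\cal A}\bar x^{m-1}\rangle=\bar\lambda\sum_i\bar x_i^m=\bar\lambda$, so (\ref{gradient}) collapses to $\nabla f(\bar y)=-m\,(\bar\lambda\,\bar x^{[m-1]}\circ\bar x)/\bar\lambda=-m\,\bar x^{[m]}=-m\,e^{[m\bar y]}$, which is precisely the stationarity condition (\ref{kkt-H}) with $\mu=m$; thus $\bar y$ is a KKT point. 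For uniqueness I would argue structurally: the forward direction shows every KKT point of (\ref{H-opt}) yields a positive eigenpair and the computation just done shows the reverse, so KKT points of (\ref{H-opt}) stand in bijection with positive eigenpairs of $\cal A$; since \cref{PF} provides exactly one such eigenpair, (\ref{H-opt}) has exactly one KKT point. Because the constraint gradient $m\,e^{[my]}$ never vanishes, every minimizer is a KKT point, and a minimizer exists by the equivalence of (\ref{H-opt}) with the maximization (\ref{H-spectral}); the unique KKT point must therefore be that minimizer, namely $\bar y=\log\bar x$. (Alternatively, uniqueness can simply be inherited from the equivalence between (\ref{H-opt}) and (\ref{H-eigen}) recorded before the theorem.)

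The one step that is more than bookkeeping is the global-optimality claim in the converse. Since $-\log\phi(e^{[y]})$ is concave on the feasible manifold, problem (\ref{H-opt}) is genuinely nonconvex (a DC program), so I cannot promote a stationarity condition to global optimality by any convexity argument. My route around this is the bijection above: uniqueness of the KKT point, combined with the existence of a minimizer and the fact that minimizers are KKT points, pins the solution down without invoking convexity at all. The only external input is \cref{PF}, which does double duty here—forcing the recovered eigenvalue to equal $\rho({\cal A})$ in the forward direction and furnishing the uniqueness of the positive eigenpair that drives the converse.
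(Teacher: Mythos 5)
Your proposal is correct and follows essentially the same route as the paper, whose proof is the discussion preceding the theorem: the identity (\ref{const}) plus the constraint forces $\mu=m$, the gradient formula (\ref{gradient}) turns stationarity into the eigen-equation for $\bar x=e^{[\bar y]}>0$, and Perron--Frobenius (\cref{PF}) identifies $\bar\lambda$ with $\rho({\cal A})$ and supplies uniqueness for the converse. Your only departure is cosmetic: where the paper dismisses the equivalence of (\ref{H-opt}) with (\ref{H-eigen}) as ``obvious,'' you spell out the KKT-point/eigenpair bijection and the existence of a minimizer, which is a slightly more careful rendering of the same argument rather than a different one.
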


According to Perron-Frobenius theorem and Theorem \ref{th:kkt-H}, the local solution of the optimization problem (\ref{H-opt}) and the KKT point are both unique. Consequently, $\bar y$ is a solution of the problem (\ref{H-opt}) if and only if it satisfies the KKT condition (\ref{kkt-H}).

Next, we derive the second order properties of problem (\ref{H-opt}).
First, it is easy to obtain the following proposition.
\begin{proposition}\label{prop:psd}
	For any $y\in\mathbb R^n$, the matrix $-\nabla ^2f(y)$ is symmetric and positive semidefinite.
\end{proposition}
\begin{proof}
Note that $-f(y)$ is the composite of the log-sum function
$$g(z) = \log \Big( \sum_{i=1 } ^K   e^{z_i} \Big)$$
with an affine transformation $z_i = {a_i^Ty+b_i}, i\in [K]$.
Since the log-sum function $g(z)$ is convex, we have $f(y)$ is concave by the convexity of composite function.
\end{proof}
On the other hand, by direct calculation, we obtain that
\begin{eqnarray}\label{Hession}
\nabla ^2 f(y) &=& \frac {m^2} {\phi(x)^2} \mbox{diag}(x) \Big({\cal A}x^{m-1} \Big)\Big({\cal A}x^{m-1} \Big)^T \mbox{diag}(x)\nonumber \\
    &&    - \frac {m(m-1)}{\phi(x)}\mbox{diag}(x) {\cal A}x^{m-2}  \mbox{diag}(x)
        - \frac {m}{\phi(x)}\mbox{diag}\Big(x  \circ {\cal A}x^{m-1} \Big)\nonumber\\
        &=& \frac {m} {\phi(x)} \mbox{diag}(x) B(x)\mbox{diag}(x)
\end{eqnarray}
where $x=e^{[y]}$ and
\begin{equation}\label{def:Bx}
	B(x)= \frac {m}{\phi(x)} \Big({\cal A}x^{m-1} \Big)\Big({\cal A}x^{m-1} \Big)^T
	-(m-1) {\cal A}x^{m-2}  -\mbox{diag}\Big(\frac {{\cal A}x^{m-1}}{x} \Big)
\end{equation}
with $\frac {{\cal A}x^{m-1}}{x} $ being a vector whose elements are $({\cal A}x^{m-1})_i\,/ x_i $ for all $i\in[n]$.

Furthermore, suppose that $(\bar y,\bar\mu)$ is a KKT point of the system (\ref{H-opt}) with $\bar \mu=m$. Let $\bar x=e^{[\bar y]}$
and $\bar \lambda ={\cal A}\bar x^m=\phi(\bar x)$. Then, ${\cal A}\bar x^{m-1}=\bar\lambda\bar x^{[m-1]}$ and
\begin{eqnarray}\label{bar:B}
B(\bar x) &=& m\bar\lambda \bar x^{[m-1]} \Big(\bar x ^{[m-1]}\Big)^ T
        -(m-1) {\cal A}\bar x^{m-2}  - \bar \lambda \mbox{diag}\left(\bar x^{[m-2]} \right)\nonumber \\
        &=& m\bar\lambda \bar x^{[m-1]} \Big(\bar x ^{[m-1]}\Big)^ T+\frac{1}{m}\nabla _x^2L(\bar x,\bar\lambda) -m\bar\lambda  \mbox{diag}\left(\bar x^{[m-2]} \right).
\end{eqnarray}
Hence, we can obtain
\begin{eqnarray}\label{Hess:x-y}
\nabla ^2L(\bar y) &=&
=\nabla ^2f(\bar y) + m ^2 \mbox{diag}\Big(\bar x^{[m]} \Big) \nonumber \\
    &=& m\bar\lambda ^{-1} \mbox{diag}(\bar x) \Big [ B(\bar x) + m \bar\lambda \mbox{diag}\left(\bar x^{[m-2]} \right)\Big ]
    \mbox{diag}(\bar x) \nonumber \\
    &=& \bar\lambda ^{-1} \mbox{diag}(\bar x) \Big [ m^2 \bar\lambda \bar x^{[m-1]} \Big(\bar x ^{[m-1]}\Big)^ T
        + \nabla _x^2 L(\bar x,\bar\lambda) \Big ]    \mbox{diag}(\bar x).
\end{eqnarray}

Now, we end this section with the following theorem.
\begin{theorem}\label{th:suff}
Suppose that ${\cal A}\in\mathbb{S}^{[m,n]}$ is irreducible and nonnegative. Let $\bar x\in\mathbb {R}^n_{++}$
and $\bar y=\log \bar x$ be solutions of systems (\ref{H-spectral}) and (\ref{H-opt}), respectively. Then, the following statements are true.
\begin{description}
\item [(i)] A KKT point $\bar x\in\mathbb {R}^n_{++}$ satisfies the second order sufficient condition of
system (\ref{H-spectral}) if and only if $\bar y=\log \bar x$ satisfies the second condition of system (\ref{H-opt}).
  \item [(ii)] At the positive eigenpair $(\bar x,\bar\lambda)$, the second order sufficient condition of system (\ref{H-spectral}) holds.
  \item [(iii)]  At the positive eigenpair $( \bar x,\bar\lambda)$, it holds that
  $$
   \nabla _x^2L(\bar x,\bar\lambda)d\neq 0,\quad \forall d\neq 0: \; \Big(\bar x^{[m-1]} \Big)^Td=0,
  $$
  and
  $$
   \nabla^2 L(\bar y)p\neq 0,\quad \forall p\neq 0: \; \Big(e^{[m\bar y]} \Big)^Tp=0.
  $$
  As a result, the matrices
  $$
   \bar P= \left( \begin{array}{c} \nabla _x^2L(\bar x,\bar\lambda) \\  \Big(\bar x^{[m-1]} \Big)^T \end{array}\right)
   \quad \mbox{and}\quad
   \bar Q= \left( \begin{array}{c} \nabla ^2L(\bar y) \\  \Big(e^{[m\bar y]} \Big)^T \end{array}\right)
  $$
  are of full rank. Here, $L(x,\lambda)$ and $L(y)$ are defined by (\ref{def:Lx}) and (\ref{def:Ly}), respectively.
\end{description}
\end{theorem}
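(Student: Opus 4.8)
The plan is to reduce all three assertions to a single statement about a symmetric nonnegative matrix, to which the classical (matrix) Perron--Frobenius theorem applies. Throughout I write $\bar x = e^{[\bar y]} > 0$ and $\bar\lambda = {\cal A}\bar x^m = \phi(\bar x) > 0$, and I use the explicit Hessian of the Lagrangian of (\ref{H-spectral}), namely $\nabla_x^2 L(\bar x,\bar\lambda) = m(m-1)\big(\bar\lambda\,\mbox{diag}(\bar x^{[m-2]}) - {\cal A}\bar x^{m-2}\big)$, obtained from (\ref{def:Lx}) together with the fact that the Hessian of ${\cal A}x^m$ is $m(m-1){\cal A}x^{m-2}$ and the eigen-equation ${\cal A}\bar x^{m-1} = \bar\lambda\bar x^{[m-1]}$. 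The tangent spaces of the two constraints at $\bar x$ and $\bar y$ are $T_x = \{d : (\bar x^{[m-1]})^T d = 0\}$ and $T_y = \{p : (e^{[m\bar y]})^T p = 0\} = \{p : (\bar x^{[m]})^T p = 0\}$. Since $\nabla g(\bar x) = m\bar x^{[m-1]}\neq 0$, LICQ holds for both problems and the relevant second-order sufficient condition is exactly positive definiteness of the Lagrangian Hessian on the respective tangent space.

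For (i) I would exploit the change of variables $d = \mbox{diag}(\bar x)\,p$. Since $\mbox{diag}(\bar x)$ is invertible and $(\bar x^{[m-1]})^T\mbox{diag}(\bar x)p = (\bar x^{[m]})^T p$, this map is a linear bijection carrying $T_y$ onto $T_x$. Feeding $d = \mbox{diag}(\bar x)p$ into (\ref{Hess:x-y}) gives $p^T\nabla^2 L(\bar y)p = \bar\lambda^{-1}\big(m^2\bar\lambda\,((\bar x^{[m-1]})^T d)^2 + d^T\nabla_x^2 L(\bar x,\bar\lambda)\,d\big)$; on the tangent spaces the rank-one term drops out, leaving $p^T\nabla^2 L(\bar y)p = \bar\lambda^{-1}\,d^T\nabla_x^2 L(\bar x,\bar\lambda)\,d$. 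As $\bar\lambda>0$ and the correspondence is a bijection of the tangent spaces, positive definiteness of $\nabla^2 L(\bar y)$ on $T_y$ is equivalent to that of $\nabla_x^2 L(\bar x,\bar\lambda)$ on $T_x$, which is exactly the claimed equivalence of the two second-order sufficient conditions.

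For (ii) it therefore suffices, by (i), to verify the second-order sufficient condition for (\ref{H-spectral}), i.e. $d^T\nabla_x^2 L(\bar x,\bar\lambda)d>0$ for every $0\neq d\in T_x$. Setting $C = \mbox{diag}(\bar x^{[(m-2)/2]})$ (invertible since $\bar x>0$) and $e = Cd$, the congruence $C^{-1}\nabla_x^2 L(\bar x,\bar\lambda)C^{-1} = m(m-1)(\bar\lambda I - W)$ with $W = C^{-1}({\cal A}\bar x^{m-2})C^{-1}$ reduces the claim to showing that $\bar\lambda I - W$ is positive definite on the image $\{e : (\bar x^{[m/2]})^T e = 0\}$ of $T_x$. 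The matrix $W$ is symmetric and nonnegative, and a direct computation using ${\cal A}\bar x^{m-2}\bar x = {\cal A}\bar x^{m-1} = \bar\lambda\bar x^{[m-1]}$ shows $W\bar x^{[m/2]} = \bar\lambda\bar x^{[m/2]}$ with $\bar x^{[m/2]}>0$. I would then invoke the matrix Perron--Frobenius theorem: a symmetric irreducible nonnegative matrix admits a positive eigenvector only for its spectral radius, which is a simple eigenvalue. Hence $\bar\lambda$ is the simple largest eigenvalue of $W$, so $\bar\lambda I - W$ is positive semidefinite with one-dimensional kernel $\mbox{span}(\bar x^{[m/2]})$; any $0\neq e$ orthogonal to $\bar x^{[m/2]}$ then gives $e^T(\bar\lambda I - W)e>0$, the desired strict inequality (note $\bar x^{[m/2]}$ itself is not tangent, since $(\bar x^{[m/2]})^T\bar x^{[m/2]} = \sum\bar x_i^m = 1\neq 0$). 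The one genuinely nontrivial ingredient, which I expect to be the main obstacle, is the irreducibility of ${\cal A}\bar x^{m-2}$ (hence of $W$): I would prove that if ${\cal A}\bar x^{m-2}$ were reducible with separating set $\mathbb I$, then $a_{iji_3\ldots i_m}=0$ for all $i\in\mathbb I$, $j\notin\mathbb I$ and all $i_3,\ldots,i_m$ (because $\bar x>0$ and each entry is a sum of nonnegative terms), whereupon symmetry lets me move any index outside $\mathbb I$ into the second slot, contradicting irreducibility of ${\cal A}$.

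Finally, (iii) follows from (ii) by a kernel argument. If $0\neq d\in T_x$ satisfied $\nabla_x^2 L(\bar x,\bar\lambda)d=0$, then $d^T\nabla_x^2 L(\bar x,\bar\lambda)d=0$, contradicting the strict positivity just established; hence $\nabla_x^2 L(\bar x,\bar\lambda)d\neq 0$ on $T_x\setminus\{0\}$, and the analogous statement for $\nabla^2 L(\bar y)$ on $T_y$ follows either verbatim in the $y$-coordinates or by transporting through (i). The full rank of $\bar P$ and $\bar Q$ is then immediate: $\bar P d=0$ forces both $(\bar x^{[m-1]})^T d=0$ (so $d\in T_x$) and $\nabla_x^2 L(\bar x,\bar\lambda)d=0$, whence $d=0$ by the previous line, so $\bar P$ has full column rank; the argument for $\bar Q$ is identical.
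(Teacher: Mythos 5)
Your proposal is correct, and it overlaps with the paper's proof in its core engine while reorganizing the logic in a genuinely leaner way. For (i) you do exactly what the paper does: the congruence $d=\mbox{diag}(\bar x)p$ applied to (\ref{Hess:x-y}), with the observation that the rank-one term $m^2\bar\lambda\,\bar x^{[m-1]}(\bar x^{[m-1]})^T$ vanishes on the tangent space; you merely make the bijection $T_y\to T_x$ explicit where the paper is terse. For (ii) both arguments rest on Perron--Frobenius for the same symmetric matrix ($\bar A$ in the paper, your $W$), but the routes diverge: the paper starts from the second-order \emph{necessary} condition (so it uses that $\bar x$ globally solves (\ref{H-spectral})) and argues by contradiction that a degenerate tangent direction would produce, via the Rayleigh quotient (\ref{tenp:th2.3}), a second eigenvector for $\bar\lambda$; you instead note directly that simplicity of the Perron eigenvalue makes $\bar\lambda I-W$ positive semidefinite with kernel $\mbox{span}(\bar x^{[m/2]})$, which yields strict positivity on the orthogonal complement of $\bar x^{[m/2]}$ without any appeal to optimality of $\bar x$ --- a mild strengthening, since only the eigenpair property is used, and in fact it shows $\nabla_x^2L(\bar x,\bar\lambda)\succeq 0$ on all of $\mathbb R^n$. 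You also supply the irreducibility of ${\cal A}\bar x^{m-2}$ (correctly: $\bar x>0$ and nonnegativity force every summand $a_{iji_3\ldots i_m}$ to vanish, and taking $i_2=j\notin\mathbb I$ with $i_3,\ldots,i_m\notin\mathbb I$ contradicts irreducibility of ${\cal A}$; your symmetry remark is not even needed), a fact the paper asserts without proof. For (iii) your route is the real departure: the paper gives a self-contained Perron--Frobenius argument with the non-symmetrically scaled matrix $\mbox{diag}(\bar x^{[2-m]}){\cal A}\bar x^{m-2}$, whereas you obtain it as a one-line corollary of (ii) ($\nabla_x^2L(\bar x,\bar\lambda)d=0$ with $0\neq d\in T_x$ would force $d^T\nabla_x^2L(\bar x,\bar\lambda)d=0$, contradicting strict positivity), which is shorter and perfectly valid since (ii) is already in hand; the paper's version buys only independence of (iii) from (ii). The full-rank conclusions for $\bar P$ and $\bar Q$ are handled identically in both. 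I find no gaps.
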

\begin{proof}
(i) Suppose that $(\bar y,\bar\mu)$ is a KKT point of system (\ref{H-opt}) with $\bar \mu=m$. The second order sufficient condition of system (\ref{H-opt}) at $(\bar y,\bar\mu)$ means that
$$
d^T\nabla ^2L(\bar y)d >0,\quad \forall d:\; \Big(e^{[m\bar y]}\Big)^Td=0.
$$
According to (\ref{Hess:x-y}), the expression of $\nabla ^2L(\bar y)$, it is easy to show that the last condition is equivalent to
$$
d ^T\Big [ m^2 \bar\lambda \bar x^{[m-1]} \Big(\bar x ^{[m-1]}\Big)^ T + \nabla _x^2 L(\bar x,\bar\lambda)\Big ] d>0, \quad
\forall d:
\Big(\bar x^{[m-1]} \Big)^T d=0,
$$
or
$$
d ^T\nabla _x^2 L(\bar x,\bar\lambda) d>0, \quad
\forall d:
\Big(\bar x^{[m-1]} \Big)^T d=0,
$$
which is equivalent to saying that $(\bar x,\bar\lambda)$ satisfies the second order sufficient condition of problem (\ref{H-spectral}).

(ii) Denote
\begin{equation}\label{bar A}
    \bar A= \mbox{diag}\Big(\bar{x}^{[-\frac {m-2}{2}]} \Big){\cal A}\bar x^{m-2} \mbox{diag}\Big(\bar{x}^{[-\frac {m-2}{2}]} \Big)\quad
\mbox{and}\quad \bar z=  \bar x ^{[\frac m2]}.
\end{equation}
The condition that $(\bar x,\bar\lambda)$ is an eigenpair of $\cal A$ implies
$$
\bar A \bar z=\Big [ \mbox{diag}\Big(\bar{x}^{[-\frac {m-2}{2}]} \Big)
 {\cal A}\bar x^{m-2} \mbox{diag}\Big(\bar{x}^{[-\frac {m-2}{2}]} \Big) \Big ]  \bar x ^{[\frac m2]} =\bar \lambda  \bar x ^{[\frac m2]}=\bar\lambda \bar z.
$$
In other words, $(\bar z,\bar\lambda)$ is a positive eigenpair of the irreducible nonnegative symmetric matrix $\bar A$, and then, $\bar \lambda$ is the largest eigenvalue of $\bar A$. Consequently, $\bar\lambda $ satisfies
\begin{equation}\label{tenp:th2.3}
\bar\lambda =\max_{p\neq 0}\frac {p^T\bar Ap}{\|p\|^2}.
\end{equation}
Since the positive eigenvector $\bar x$ is the solution of the optimization problem (\ref{H-spectral}), the second order necessary
condition holds. Thus, $(\bar x,\bar\lambda)$ satisfies
$$
d^T\nabla ^2_x L(\bar x,\bar \lambda)d\ge 0,\quad\forall d:\;  \Big(\bar x^{[m-1]}\Big)^Td=0.
$$
It suffices to show the strict inequality holds for all $d\neq 0$ satisfying $\Big(\bar x^{[m-1]}\Big)^Td=0$.

Suppose on the contrary that there is some $d\neq 0$ satisfying $\Big(\bar x^{[m-1]}\Big)^Td=0$ but
$d^T\nabla ^2_x L(\bar x,\bar \lambda)d=0$, i.e., 
$d^T {\cal A} \bar x^{m-2}d = \bar\lambda d^T \mbox{diag}\left(\bar x^{[m-2]}\right)d=0.$
Denote $p=\bar{x}^{[\frac {m-2}{2}]}\circ d$, then, we can get from (\ref{bar A}) that
$$
\frac {p^T\bar Ap}{\|p\|^2}=
\frac {d^T
 {\cal A}\bar x^{m-2}d}
 { d^T \mbox{diag}\left(\bar x^{[m-2]}\right)d}=\bar \lambda.
$$
This indicates that $p$ is a solution of (\ref{tenp:th2.3}). Consequently, $p$ is an eigenvector of $\bar A$ corresponding to the eigenvalue $\bar\lambda$. Since $(\bar{z},\bar\lambda)$ is a positive eigenpair of $\bar A$, according to Perron-Frobenius theorem, there must exist a scalar $\beta$ such that $p=\beta \bar z=\beta \bar x ^{[\frac m2]}$. However, $d$ satisfies $\Big(\bar x^{[m-1]}\Big)^Td=0$. Therefore,
$$
0=  \Big(\bar x^{[m-1]}\Big)^Td=\Big(\bar x^{[m-1]}\Big)^T\Big(\bar{x}^{[-\frac {m-2}{2}]}\circ p \Big)
    = \beta \| \bar{x}^{[\frac {m}{2}]} \|^2\neq 0,
$$
which yields a contradiction. Thus, the second order sufficient condition of problem (\ref{H-spectral}) holds at $(\bar x,\bar \lambda)$.

(iii) For any $d$ satisfying $\Big(\bar x^{[m-1]} \Big)^T d=0$, the equality $ \nabla _x^2 L(\bar x,\bar\lambda) d=0$ implies
$$
\Big(\mbox{diag}(\bar x^{[2-m]}) {\cal A}\bar x^{m-2} - \bar\lambda I\Big) d= 0.
$$
On the other hand, it obviously holds that
$$
\Big(\mbox{diag}(\bar x^{[2-m]}) {\cal A}\bar x^{m-2} - \bar\lambda I \Big) \bar x=  \mbox{diag}(\bar x^{[2-m]}) {\cal A}\bar x^{m-1} - \bar\lambda \bar x = 0.$$
If $d\neq 0$, then, both $\bar x$ and $d$ are  eigenvectors of the irreducible nonnegative matrix $\mbox{diag}(\bar x^{[-(m-2)]}) {\cal A}\bar x^{m-2}$ corresponding to the positive eigenvalue $\bar\lambda$. By Perron-Frobenious theorem, there is a scalar $\gamma\neq 0$ such that $d=\gamma\bar x$, which contradicts to the fact that $\Big(\bar x^{[m-1]} \Big)^T d=0$.

The conclusion in item (iii) about $\bar y$ can be proved in a similar way.
\end{proof}

\section{Power-like method and its convergence}\label{method}
\setcounter{equation}{0}

In this section, we will derive a simple but efficient iterative method for finding the spectral radius of an irreducible nonnegative symmetric tensor by solving a sequence of linearized approximation problems to system (\ref{H-opt}).

Since  function $f(y)$ is concave, the problem (\ref{H-opt}) is actually a DC programming. Hence, it can be solved by linearized method. Specifically, at each iteration, we solve a constrained optimization problem by approximating the objective function with a linear function while keeping the constraint unchanged.
Details are given below.

At the $k$-th iteration, suppose that a feasible iteration point $y_k$ has already been obtained. 
Consider the following linearized sub-problem
\begin{equation}\label{sub-H}\left \{ \begin{array}{rl}
\min & \nabla f(y_k)^T(y-y_k)\\
\mbox{s.t.} & \displaystyle \sum_{i=1}^n e^{my_i}-1=0.
\end{array}\right.
\end{equation}
The Lagrangian function of the above sub-problem is
$$L_k(y, \mu)= \nabla f(y_k)^T(y-y_k) + \frac 1m \mu  \Big(\sum_{i=1}^n e^{my_i}-1 \Big),$$
and the KKT condition is
\begin{equation}\label{kkt-sub}
    \nabla f(y_k) + \mu e^{[my]}=0 \quad\mbox{and}\quad \sum_{i=1}^n e^{m(y)_i}-1=0.
\end{equation}
The last equation and (\ref{const}) deduce $\mu =m.$
Not difficult to see that (\ref{kkt-sub}) has a unique solution $(y_{k+1}, \mu_{k+1})$ with $\mu_{k+1} = m$, i.e., sub-problem (\ref{sub-H}) has a unique solution $y_{k+1}$ satisfying
\begin{equation}\label{KKT-H}
\nabla f(y_k) +m e^{[my_{k+1}]}=0\quad\mbox{and}\quad \sum_{i=1}^n e^{m(y_{k+1})_i}-1=0.
\end{equation}
The following proposition especially implies that the sequence $\{f(y_k)\}$ is decreasing. 
\begin{proposition}\label{prof:desc}
The sequence $\{y_k\}$ generated by (\ref{sub-H}) has the following properties.
\begin{description}
  \item [(i)]  For any $k\ge 0$, the Lagrangian multiplier $\mu _{k+1}$ equals to $m$.
  \item [(ii)] The sequence of function evaluations $\{f(y_k)\}$ is non-increasing, i.e., $f(y_{k+1}) \le f(y_k)$ for all $k\ge 0$. Moreover, the equality $f(y_{k+1})=f(y_k)$ holds if and only if $y_k$ is a KKT point of the problem (\ref{H-opt}),
      and hence $x_k=e^{[y_k]}$ is a positive eigenvector of $\cal A$ corresponding to the positive eigenvalue $\lambda _k={\cal A}x_k^m$.
\end{description}
\end{proposition}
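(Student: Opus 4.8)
The plan is to treat the two items separately, disposing of (i) by a one-line summation and devoting the main effort to the equality characterization in (ii).

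For item (i), I would simply sum the $n$ scalar equations contained in the first relation of the subproblem's KKT system (\ref{kkt-sub}). Using identity (\ref{const}), which asserts $\sum_{i=1}^n \partial f(y_k)/\partial y_i = -m$ independently of the point, together with the feasibility constraint $\sum_{i=1}^n e^{my_i}=1$, this collapses to $-m+\mu=0$, forcing $\mu_{k+1}=m$. This is exactly the computation already sketched in the text preceding the proposition, so item (i) is immediate.

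For the monotonicity in item (ii), I would exploit that $f$ is concave (Proposition \ref{prop:psd}), which yields the global overestimate $f(y_{k+1}) \le f(y_k) + \nabla f(y_k)^T(y_{k+1}-y_k)$. Since $y_{k+1}$ solves the linearized subproblem (\ref{sub-H}) and $y_k$ is itself feasible for that subproblem (it lies on $\{\sum_i e^{my_i}=1\}$), the minimized linear objective obeys $\nabla f(y_k)^T(y_{k+1}-y_k) \le \nabla f(y_k)^T(y_k-y_k)=0$. Chaining the two bounds gives $f(y_{k+1}) \le f(y_k)$.

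The equality characterization is the crux. The easy direction is ($\Leftarrow$): if $y_k$ is a KKT point of (\ref{H-opt}) then it satisfies $\nabla f(y_k)+m e^{[my_k]}=0$ together with the constraint, which is precisely the subproblem's KKT system (\ref{KKT-H}); by uniqueness of that system's solution we obtain $y_{k+1}=y_k$ and hence $f(y_{k+1})=f(y_k)$. The harder direction is ($\Rightarrow$), and here lies the main obstacle. Assuming $f(y_{k+1})=f(y_k)$ forces both inequalities above to be tight, in particular $\nabla f(y_k)^T(y_{k+1}-y_k)=0$, so $y_k$ attains the same minimal value of the linear objective as $y_{k+1}$. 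Tightness alone does not yield $y_{k+1}=y_k$, because the feasible set $\{\sum_i e^{my_i}=1\}$ is not convex. I would resolve this by passing to the convex sublevel body $\{\sum_i e^{my_i}\le 1\}$: since irreducibility and $x_k=e^{[y_k]}>0$ force (via (\ref{gradient})) $\nabla f(y_k)=-m\,({\cal A}x_k^{m-1}\circ x_k)/({\cal A}x_k^m)<0$ componentwise, the linear objective is minimized on the boundary of this body, and strict convexity of $\sum_i e^{my_i}$ makes that boundary minimizer unique. As $y_k$ and $y_{k+1}$ both attain the minimum, uniqueness gives $y_{k+1}=y_k$; substituting into (\ref{KKT-H}) shows that $y_k$ satisfies (\ref{kkt-H}), i.e. $y_k$ is a KKT point of (\ref{H-opt}). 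The concluding assertion that $x_k=e^{[y_k]}$ is then a positive eigenvector of $\cal A$ with eigenvalue $\lambda_k={\cal A}x_k^m$ follows at once from Theorem \ref{th:kkt-H}.
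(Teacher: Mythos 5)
Your proposal is correct, and items (i) and the monotonicity argument coincide exactly with the paper's proof (summation identity (\ref{const}) for the multiplier; concavity of $f$ plus feasibility of $y_k$ for the descent inequality). Where you diverge is the crux step of the equality characterization. Both you and the paper first reduce $f(y_{k+1})=f(y_k)$ to the statement that $y_k$ itself solves the subproblem (\ref{sub-H}); but the paper then simply invokes the first-order necessary (KKT) conditions of (\ref{sub-H}) at the minimizer $y_k$ — legitimate on the nonconvex set $\{\sum_i e^{my_i}=1\}$ because the constraint gradient $me^{[my]}$ never vanishes, so the constraint qualification is trivial — and pins the multiplier to $m$ via (\ref{const}), landing directly on the KKT system of (\ref{H-opt}). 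You instead prove that the subproblem's minimizer is \emph{unique}, by relaxing the equality constraint to the convex sublevel body $\{\sum_i e^{my_i}\le 1\}$, noting that $\nabla f(y_k)<0$ componentwise forces every minimizer onto the boundary, and using strict convexity of $\sum_i e^{my_i}$ to rule out two distinct boundary minimizers; this gives $y_k=y_{k+1}$, after which (\ref{KKT-H}) is read off at $y_k$. Your route is longer but buys two things: it avoids appealing to KKT necessity on a nonconvex feasible set, and it makes the backward implication (KKT point $\Rightarrow$ equality, via uniqueness of the solution of (\ref{kkt-sub})) fully explicit, whereas the paper handles that direction only implicitly through the uniqueness discussion preceding the proposition. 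The paper's route is shorter and needs no convexification. One small point to tighten in your argument: the componentwise negativity $\nabla f(y_k)<0$ rests on ${\cal A}x_k^{m-1}>0$ for $x_k>0$, which is where irreducibility enters (if some component $({\cal A}x_k^{m-1})_i$ vanished, all entries $a_{ii_2\ldots i_m}$ would vanish, making ${\cal A}$ reducible); you cite irreducibility but should state this one-line justification.
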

\begin{proof}
From the analysis before \cref{prof:desc}, we can directly obtain item (i). 
Now we turn to prove item (ii). 
Since $y_k$ is  feasible for the problem (\ref{sub-H}), it is clear that $\nabla f(y_k)^T(y_{k+1}-y_k)\le 0$.
By the concavity of the function $f(y)$,  we have for any $k\ge 0$,
$$
f(y_{k+1})-f(y_k)\le \nabla f(y_k)^T(y_{k+1}-y_k)\le 0.
$$
Clearly, the equality $f(y_{k+1})=f(y_k)$ holds if and only if $\nabla f(y_k)^T(y_{k+1}-y_k)= 0$,
which is equivalent to saying that $y_k$ is also a solution to the problem (\ref{sub-H}). Thus, $y_k$
together with $\mu _{k+1}=m$ satisfies the KKT condition, i.e.,
$$
\nabla f(y_k) + me^{[my_k]}=0  \quad\mbox{and}\quad \sum_{i=1}^n e^{m(y_k)_i}-1=0,
$$
which means that $y_k$ is a KKT point of the problem (\ref{H-opt}) with $\mu _k=m$ being the Lagrangian multiplier. 
The rest results about $x_k$ directly follows Theorem \ref{th:kkt-H}.
\end{proof}

In what follows, we derive the iterative scheme of the system (\ref{sub-H}) with variable $x$. Let $x_k=e^{[y_k]}$. It follows from (\ref{KKT-H}) and (\ref{gradient}) that
$$
-m \frac {{\cal A}x_k^{m-1}\circ x_k}{{\cal A}x_k^m} + m x_{k+1}^{[m]}=0.
$$
Denote
$$
F(x):=-{\cal A}x^{m-1}+{\cal A}x^m\cdot x^{[m-1]}.
$$
It is easy to see that $F(x)=0$ if and only if $x$ is an $H$-eigenvalue of ${\cal A}$ with
corresponding eigenvector $\lambda={\cal A}x^m$. As a result, we obtain an explicit iterative scheme, which is shown in Algorithm \ref{alg1} below.

\begin{algorithm}[!htbp]
\caption{(Power-like method for finding the spectral radius)}\label{alg1}
\begin{algorithmic}[1]
\STATE Given an initial point $x_0\in \mathbb{B}_{+}$. Let $k:=0$ and $\lambda_0={\cal A}x_0^m$.
\WHILE{$F(x_k)\neq 0$}
\STATE Compute $x_{k+1}$ and $\lambda_{k+1}$ by
\begin{equation}\label{iter-H}
x_{k+1}= \left(\frac {{\cal A}x_k^{m-1}\circ x_k}{{\cal A}x_k^m}\right)^{[\frac{1}{m}]}\quad\mbox{and}\quad\lambda_{k+1}={\cal A}x_{k+1}^m.
\end{equation}
Let $k:=k+1$.
\ENDWHILE
\end{algorithmic}
\end{algorithm}

Iterative scheme (\ref{iter-H}) of Algorithm \ref{alg1} looks very like but different from higher-order power method. Consequently, we call it power-like method.

\begin{theorem}\label{th:positive-H}
Let the sequences $\{x_k\}$ and $\{\lambda _k\}$ be generated by Algorithm \ref{alg1} with a positive feasible initial point $x_0$, and $\{y_k\}$ be the corresponding sequence satisfying $x_k=e^{[y_k]}$. Then,
\begin{description}
  \item [(i)] the positive sequence $\{\lambda_k\}$ is increasing and converges to some $\bar \lambda>0$; and
  \item [(ii)] the sequence $\{y_{k}\}$ is bounded and $\{x_k\}$ has a positive lower bound.
\end{description}
\end{theorem}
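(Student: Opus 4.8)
\ For part (i), I would first check that the iteration leaves $\mathbb{B}_+$ invariant, so that all quantities stay well defined. Summing the $m$-th powers of the components of the update (\ref{iter-H}) and using $\langle x_k,{\cal A}x_k^{m-1}\rangle={\cal A}x_k^m$ shows $\sum_i (x_{k+1})_i^m=1$, i.e. $x_{k+1}\in\mathbb{B}$; positivity of $x_{k+1}$ follows because $x_k>0$ together with irreducibility forces ${\cal A}x_k^{m-1}>0$ (if some coordinate of ${\cal A}x_k^{m-1}$ vanished for a positive $x_k$, the corresponding singleton index set would witness reducibility). Hence $x_k\in\mathbb{B}_+$ for all $k$ by induction. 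Since $\lambda_k={\cal A}x_k^m=\phi(x_k)=e^{-f(y_k)}$, the monotonicity of $\{f(y_k)\}$ from Proposition~\ref{prof:desc}(ii) gives that $\{\lambda_k\}$ is non-decreasing, and strictly increasing as long as the while-loop runs (so $F(x_k)\ne0$ and $y_k$ is not a KKT point). Finally $\lambda_k=\phi(x_k)\le\max_{x\in\mathbb{B}}\phi(x)<\infty$ by continuity of $\phi$ on the compact set $\mathbb{B}$, so the increasing sequence converges to some $\bar\lambda$ with $\bar\lambda\ge\lambda_0={\cal A}x_0^m>0$.

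For part (ii), the upper bound is immediate: $x_k\in\mathbb{B}$ gives $0<(x_k)_i\le1$, whence $(y_k)_i=\log (x_k)_i\le0$, so $\{y_k\}$ is bounded above. Because $x_k=e^{[y_k]}$, boundedness of $\{y_k\}$ is equivalent to a uniform positive lower bound on $\{x_k\}$, so it remains to prove the latter; this is the heart of the theorem.

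For the lower bound I would exploit the self-correcting nature of (\ref{iter-H}). Written coordinatewise it gives $\big((x_{k+1})_i/(x_k)_i\big)^m=({\cal A}x_k^{m-1})_i/\big(\lambda_k (x_k)_i^{m-1}\big)$, so a coordinate that is small relative to its neighbours carries a large growth factor. Two structural facts drive the recovery: the constraint $\sum_i (x_k)_i^m=1$ forces $\max_i (x_k)_i\ge n^{-1/m}$ at every step, and irreducibility means the directed graph associated with the positive entries of ${\cal A}$ is strongly connected, so every coordinate is linked to a large one by a path of length at most $n-1$. Tracing such a path, a coordinate of size $\varepsilon$ fed by an $O(1)$ coordinate is boosted in one step to size at least $c\,\varepsilon^{(m-1)/m}$; since the exponent $(m-1)/m<1$, iterating this map from any positive value converges to a level determined only by ${\cal A}$, $m$ and $n$. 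Compounding the boosts along the path should yield a constant $\delta>0$, independent of $k$, with $(x_k)_i\ge\delta$ for all $i$ and all large $k$, and adjusting $\delta$ for the finitely many remaining (positive) iterates gives the uniform lower bound, hence the boundedness of $\{y_k\}$.

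The main obstacle is exactly this lower bound. The difficulty is that a single iteration only partially repairs a small coordinate — it improves $\varepsilon$ to order $\varepsilon^{(m-1)/m}$, not to $O(1)$ — and the index carrying the dominant coordinate moves with $k$, so the repair must be compounded over up to $n$ consecutive steps along a path supplied by irreducibility, while verifying that the accumulated multiplicative constant stays bounded away from $0$ uniformly in $k$. An alternative closing route is by contradiction: if the bound failed, a subsequence of $\{x_k\}$ would converge to some $x^*\in\mathbb{B}$ with a zero coordinate; using that $\{f(y_k)\}$ converges together with the concavity inequality $f(y_{k+1})-f(y_k)\le\nabla f(y_k)^T(y_{k+1}-y_k)\le0$ one can force $x_{k+1}-x_k\to0$, so that $x^*$ becomes a fixed point of (\ref{iter-H}) whose support carries a positive Perron eigenvector of a proper principal subtensor; ruling this out against the strict monotonicity of $\{\lambda_k\}$ and Perron--Frobenius (\cref{PF}) is where the real work lies. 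By contrast, part (i) and the upper bound in part (ii) are routine.
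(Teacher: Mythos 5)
Your part (i) and the easy half of part (ii) are correct and match the paper: monotonicity of $\{\lambda_k\}$ comes from \cref{prof:desc} via $\lambda_k=e^{-f(y_k)}$, boundedness from compactness of $\mathbb{B}$, and your explicit check that the iteration preserves $\mathbb{B}_+$ (summing $m$-th powers, plus the singleton-index witness that $({\cal A}x_k^{m-1})_i=0$ would contradict irreducibility) is a correct filling-in of what the paper leaves implicit. The genuine gap is the one you flag yourself: the uniform positive lower bound on $\{x_k\}$ is only planned, not proved. Your path-compounding scheme faces concrete obstructions you do not resolve: irreducibility guarantees, for the current set of small coordinates, only \emph{one} positive entry $a_{ii_2\ldots i_m}$ with $i$ small and $i_2,\ldots,i_m$ all outside the small set; the boost requires all $m-1$ feeding coordinates to be $O(1)$ simultaneously; the index carrying the maximal coordinate and the small set both move with $k$; and compounding along a path of length up to $n-1$ needs the intermediate coordinates to stay large over that window, uniformly in $k$ --- none of which is established. (A small slip as well: from $(x_{k+1})_i^m=({\cal A}x_k^{m-1})_i(x_k)_i/\lambda_k$ the one-step boost is $\varepsilon\mapsto c\,\varepsilon^{1/m}$, not $c\,\varepsilon^{(m-1)/m}$; harmless for the heuristic, but a sign the recursion was not worked through.) Your alternative contradiction route likewise stops exactly at the point you call ``the real work.''

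The paper closes this step without any path tracing, and its device is worth internalizing. Set $\bar\epsilon:=\liminf_{k\to\infty}{\cal A}x_k^{m-1}$ componentwise, and split $[n]$ into $\mathbb{J}=\{i:\bar\epsilon_i>0\}$ and $\mathbb{I}=\{i:\bar\epsilon_i=0\}$. For $i\in\mathbb{J}$, the iteration (\ref{iter-H}) gives $(x_{k+1})_i\ge(\bar\epsilon_i/\bar\lambda)^{1/m}(x_k)_i^{1/m}$, and telescoping yields $(x_{k+1})_i\ge\eta_i^{\rho+\rho^2+\cdots+\rho^{k+1}}(x_0)_i^{\rho^{k+1}}$ with $\rho=1/m<1$ and $\eta_i=\bar\epsilon_i/\bar\lambda$; since the exponent sum is bounded by $\rho/(1-\rho)$ and $(x_0)_i^{\rho^{k+1}}\to1$, each such coordinate is bounded away from zero --- this is precisely your ``iterating $\varepsilon\mapsto c\,\varepsilon^{1/m}$ converges to a fixed level'' intuition, but applied coordinatewise with the driving constant supplied by the liminf rather than by a path from the maximal coordinate, which kills the uniformity-in-$k$ issue at a stroke. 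Then $\mathbb{I}=\emptyset$: along a subsequence with $({\cal A}x_k^{m-1})_i\to0$ for $i\in\mathbb{I}$ and $x_k\to\bar x$, one gets $\sum_{i_2,\ldots,i_m}a_{ii_2\ldots i_m}\bar x_{i_2}\cdots\bar x_{i_m}=0$ for $i\in\mathbb{I}$; since $\bar x_j>0$ for all $j\in\mathbb{J}$ by the bound just proved, this forces $a_{ii_2\ldots i_m}=0$ for all $i\in\mathbb{I}$ and $i_2,\ldots,i_m\notin\mathbb{I}$, contradicting irreducibility directly via \cref{stru-t}. No fixed-point analysis of the map, no $x_{k+1}-x_k\to0$, and no Perron--Frobenius argument for principal subtensors is needed; your second route could probably be completed, but the paper's liminf dichotomy is strictly simpler than either of your proposals.
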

\begin{proof}(i) Obviously, the assumption that $\cal A$ is irreducible and nonnegative leads to the fact that both the sequences $\{x_k\}$ and $\{\lambda_k\}$ are positive.

From Proposition \ref{prof:desc} we know that the sequence $\{f(y_k)\}$ is decreasing. This implies that the sequence $\{-\log \phi(x_k)\}$ is decreasing. Thus, the sequence $\{\phi(x_k)\}$ is increasing, which is equivalent to saying that the sequence $\{\lambda_k\}$ is increasing. Since the sequence $\{x_k\}\subset \mathbb B_+$ is bounded, the sequence $\{\lambda_k\}$ is bounded, too. As a result, we claim that the sequence $\{\lambda_k\}$ converges to some $\bar\lambda>0$.

(ii) Actually, we only need to show the sequence $\{x_k\}$ has a positive lower bound.

Since $\{x_k\}\subset \mathbb B_+$ is bounded and $\cal A$ is nonnegative, the sequence $\{{\cal A}x_k^{m-1}\}$ is also bounded, and
$$
\bar \epsilon :{=} \mathop {\lim\inf}_{k\to\infty} {\cal A}x_k^{m-1}\ge 0.
$$
Let $\rho =\frac 1m$, $\eta _i= \frac {\bar\epsilon _i} {\bar \lambda }$ for any $i\in [n]$, and define two index sets
$$
{\mathbb J}=\{i\;|\; \bar\epsilon_i>0\}\quad\mbox{and}\quad
\mathbb I=\{i\;|\; \bar\epsilon_i=0\}.
$$
We prove item (ii) by showing that the subvector $(x_{k+1})_{\mathbb J}$ in bounded away from $0$ and $\mathbb I=\emptyset$.

Notice that ${\cal A}x_k^m=\lambda_k\le \bar\lambda$.
It follows from (\ref{iter-H}) that for any $i\in \mathbb J$ and $k\ge 0$,
\begin{eqnarray*}
(x_{k+1})_i &=&  \Big(\frac {{\cal A}x_k^{m-1}}{\lambda _k} \Big)_i ^{\frac 1m }\Big (x_k\Big)_i^{\frac 1m} \\
&\ge&  \Big(\frac {\bar\epsilon _i} {\bar \lambda } \Big) ^{\frac 1m}\Big(x_k\Big )_i^{\frac 1m}
=\eta _i  {} ^{\rho} \Big(x_k\Big )_i^{\rho }\\
&\ge&  \cdots \ge \eta _i {}^{\rho +\rho ^2 +\cdots +\rho ^{k+1}} \Big(x_0\Big)_i^{\rho ^{k+1}}.
\end{eqnarray*}
The last inequality implies that for each $i\in \mathbb J$, $\{(x_k)_i\}$ has a positive lower bound.

Next, we verify $\mathbb I=\emptyset$. Suppose on the contrary that $\mathbb I\neq \emptyset$. Then, there exsits an infinite set $K$ such that for each $i\in \mathbb I$, we have
$$
\lim _{k\in K,\,k\to \infty} \Big({\cal A}x_k^{m-1}\Big)_i=0.
$$
Since $\{x_k\}\subset \mathbb B_+$ is bounded, there is a subsequence of $\{x_k\}_{k\in K}$ converging to some $\bar x\in\mathbb  B_+$. Without loss of generality, suppose that $\{x_k\}_{k\in K}\to \bar x$ as $k\rightarrow\infty$. Then, it follows from the last equality that for any $i\in \mathbb I$,
$$
\sum_{i_2,\ldots, i_m} a_{i i_2 \ldots i_m}\bar x_{i_2}\cdots \bar x_{i_m}=0.
$$
Since we have already shown $\bar x_i>0$ for any $i\in\mathbb  J$, the last equality indicates that
$$
a_{i i_2 \ldots i_m}=0,\quad \forall i\in\mathbb  I\quad\mbox{and}\quad \forall i_2,\ldots, i_m\not\in \mathbb I.
$$
This contradicts to the fact that $\cal A$ is irreducible. Therefore, $\mathbb I=\emptyset$ and the item (ii) is true.
\end{proof}

The convergence of the power-like method is stated in the following theorem.
\begin{theorem}\label{th:conv-H}
Let the sequences $\{x_k\}$ and $\{\lambda _k\}$ be generated by Algorithm \ref{alg1}, and $\{y_k\}$ be the corresponding sequence satisfying $x_k=e^{[y_k]}$. Then, the limit of the sequence $\{\lambda_k\}$ is the unique positive H-eigenvalue $\bar \lambda$ of $\cal A$. Moreover, the sequence $\{x_k\}$ converges to a positive eigenvector $\bar x$ of $\cal A$ corresponding to $\bar \lambda$.
\end{theorem}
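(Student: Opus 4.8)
The plan is to exploit the strict-descent dichotomy already recorded in Proposition~\ref{prof:desc} together with the compactness and convergence facts from Theorem~\ref{th:positive-H}, and then to upgrade subsequential convergence to convergence of the whole sequence by invoking uniqueness of the positive eigenpair. By Theorem~\ref{th:positive-H}, the sequence $\{\lambda_k\}$ increases to some $\bar\lambda>0$, while $\{x_k\}$ lies in $\mathbb B_+$ and has a positive lower bound; since every $x\in\mathbb B_+$ also satisfies $x_i\le 1$, the iterates are confined to a compact subset of $\mathbb R^n_{++}$. Writing $\phi(x_k)={\cal A}x_k^m=\lambda_k$, we have $f(y_k)=-\log\lambda_k\to-\log\bar\lambda$, so $f(y_{k+1})-f(y_k)\to 0$; combined with the concavity estimate $f(y_{k+1})-f(y_k)\le \nabla f(y_k)^T(y_{k+1}-y_k)\le 0$ from the proof of Proposition~\ref{prof:desc}, this forces $\nabla f(y_k)^T(y_{k+1}-y_k)\to 0$.

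Next I would show that every accumulation point of $\{x_k\}$ is a positive eigenvector with eigenvalue $\bar\lambda$. Let $x_k\to x^*$ along an infinite index set $K$; the positive lower bound gives $x^*>0$, and $\phi(x^*)=\bar\lambda>0$ by continuity. Because the one-step map $x\mapsto T(x):=\big(({\cal A}x^{m-1}\circ x)/{\cal A}x^m\big)^{[\frac1m]}$ is continuous at $x^*$ --- its denominator tends to $\bar\lambda>0$ and $t\mapsto t^{1/m}$ is continuous on $\mathbb R_+$ --- the identity $x_{k+1}=T(x_k)$ yields $x_{k+1}\to T(x^*)$ along $K$. Since the full sequence $\lambda_{k+1}=\phi(x_{k+1})$ also converges to $\bar\lambda$, we obtain $\phi(T(x^*))=\bar\lambda=\phi(x^*)$. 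Setting $y^*=\log x^*$ and letting $y^{**}$ be the unique solution of subproblem~(\ref{sub-H}) at $y^*$, so that $e^{[my^{**}]}=T(x^*)^{[m]}$, the equality $f(y^{**})=f(y^*)$ together with the descent estimate forces $\nabla f(y^*)^T(y^{**}-y^*)=0$. By the same reasoning as in Proposition~\ref{prof:desc}(ii), this means $y^*$ itself solves the subproblem, hence is a KKT point of~(\ref{H-opt}); Theorem~\ref{th:kkt-H} then identifies $x^*=e^{[y^*]}$ as a positive eigenvector of $\cal A$ with eigenvalue ${\cal A}(x^*)^m=\bar\lambda$.

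Finally I would conclude by uniqueness. By the Perron--Frobenius theorem (Theorem~\ref{PF}) and Theorem~\ref{th:kkt-H}, $\cal A$ admits a unique positive eigenpair $(\bar x,\bar\lambda)$, and $\bar\lambda=\rho({\cal A})$ is the unique positive $H$-eigenvalue. Thus every accumulation point of $\{x_k\}$ equals $\bar x$; as $\{x_k\}$ is contained in a compact set and has $\bar x$ as its only accumulation point, the whole sequence converges, $x_k\to\bar x$, while $\lambda_k\to\bar\lambda$, as claimed.

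The main obstacle is the middle step: transferring the ``equality in the descent inequality $\Leftrightarrow$ KKT point'' characterization of Proposition~\ref{prof:desc}(ii) from the running iterates to an arbitrary accumulation point. This requires simultaneously (i) the continuity of the one-step map at $x^*$, which is where strict positivity of the limiting eigenvalue $\bar\lambda$ (so that ${\cal A}(x^*)^m\neq 0$) and the positive lower bound from Theorem~\ref{th:positive-H}(ii) are indispensable, and (ii) the uniqueness of the positive eigenpair, without which one could claim only subsequential convergence. The remaining steps are routine continuity and monotonicity arguments.
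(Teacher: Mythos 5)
Your proof is correct, and it shares the paper's overall skeleton: every accumulation point of $\{x_k\}$ is shown to be a KKT point of (\ref{H-opt}), hence by Theorem \ref{th:kkt-H} a positive eigenvector associated with $\bar\lambda$, and Perron--Frobenius uniqueness (Theorem \ref{PF}) then upgrades subsequential convergence to convergence of the whole sequence. Where you genuinely depart from the paper is in the mechanics of the middle step. The paper fixes a subsequence along which both $\{x_k\}$ and $\{x_{k+1}\}$ converge (to $\bar x$ and $\tilde x$), passes to the limit in the descent inequality and in the KKT system (\ref{KKT-H}), and then argues that $\tilde y$ and, by equality of objective values, also $\bar y$ are global solutions of the linearized problem at $\bar y$, so that $\bar y$ inherits the KKT conditions. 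You instead exploit continuity of the one-step map $T$ at $x^*$ (valid precisely because $x^*>0$ and $\phi(x^*)=\bar\lambda>0$), so that $\{x_{k+1}\}_K\to T(x^*)$ with no further subsequence extraction; since $\phi(T(x^*))=\bar\lambda=\phi(x^*)$, one exact iteration from $x^*$ leaves $f$ unchanged, and the dichotomy already proved in Proposition \ref{prof:desc}(ii) immediately makes $y^*$ a KKT point of (\ref{H-opt}). Your route buys two small things: it removes the double-subsequence bookkeeping, and it only ever uses the direction ``global solution of the subproblem $\Rightarrow$ KKT point'' (which needs only the constraint qualification, satisfied here since the constraint gradient $me^{[my]}$ never vanishes), whereas the paper additionally needs the converse inference ``KKT point of the limiting linearized problem $\Rightarrow$ global solution,'' which it justifies by viewing that problem as a convex program --- strictly speaking this requires passing to the convex relaxation $\sum_i e^{my_i}\le 1$ with positive multiplier, or appealing to uniqueness of the subproblem's KKT point, since the equality-constrained feasible set itself is not convex. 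What the paper's route buys in exchange is that it stays entirely in $y$-space and never needs continuity of the $m$-th-root/one-step map. Both arguments are complete; yours also spells out the final compactness-plus-unique-accumulation-point step that the paper leaves implicit.
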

\begin{proof}
By Perron-Frobenius theorem, it suffices to  show that the limit $\bar\lambda$ of $\{\lambda_k\}$ is the unique positive $H$-eigenvalue of $\cal A$ and every accumulation point of the sequence $\{x_k\}$ is a positive eigenvector of $\cal A$ corresponding to the eigenvalue $\bar \lambda$.

Theorem \ref{th:positive-H} has shown that every accumulation point of the sequence $\{x_k\}$ is positive. Suppose that for an infinite set $K$, the subsequence $\{x_k\}_K$ converges to some $\bar x>0$ and the subsequence $\{x_{k+1}\}_K$ converges to some $\tilde x>0$. Accordingly, the subsequences $\{y_k\}_K$ and $\{y_{k+1}\}_K$ converge to some $\bar y=\log \bar x$ and
$\tilde y=\log \tilde x$.

Since $y_{k+1}$ is a solution of the problem (\ref{sub-H}), we have that
$$
f(y_{k+1})-f(y_k)  \le   \nabla f(y_k)^T(y_{k+1}-y_k)\le 0 .
$$
Taking limits in the last inequalities as $k\to\infty$ with $k\in K$, we get $\nabla f(\bar y)^T(\tilde y-\bar y)=0$, or equivalently, $\nabla f(\bar y)^T \tilde y=\nabla f(\bar y)^T\bar y$. On the other hand, by taking limits in both equalities of system (\ref{KKT-H}) as $k\to\infty$ with $k\in K$, we can obtain
$$
\nabla f(\bar y) + m e^{[m\tilde y]}=0\quad\mbox{and}\quad \sum _{i=1}^n e^{m\tilde y_i}=1,
$$
which implies the fact that $\tilde y$, together with the multiplier $\mu =m$, is a KKT point of the convex programming problem
$$\left\{\begin{array}{lll}
\min & \nabla f(\bar y)^T(y-\bar y),  \\
\mbox{s.t.} & \displaystyle \sum _{i=1}^n e^{m y_i}=1.
\end{array}\right.
$$
Consequently, $\tilde y$ is a global solution of the above problem. Besides, the fact $\nabla f(\bar y)^T \tilde y=\nabla f(\bar y)^T\bar y$ and $\sum _{i=1}^n e^{m\bar y_i}=1$ shows that $\bar y$ is also a solution of the last problem, and hence, it together with $\mu=m$ is a KKT point of the last problem. In other words, $\bar y$ satisfies
$$
\nabla f(\bar y) + m e^{[m\bar y]}=0\quad\mbox{and}\quad \sum _{i=1}^n e^{m\bar y_i}=1,
$$
which demonstrates that $\bar y$ is a KKT point of the problem (\ref{H-opt}) or (\ref{H-opt}). By Theorem \ref{th:kkt-H}, $\bar x=e^{[\bar y]}$ is a positive eigenvector of $\cal A$ corresponding to the positive eigenvalue $\bar\lambda ={\cal A}\bar x^m$.
\end{proof}

\section{Q-linear convergence}\label{Qconv}
\setcounter{equation}{0}

In this section, we will show that the convergence rate of both sequences $\{x_k\}$ and $\{\lambda_k\}$ generated by
the power-like method are $Q$-linear, while the convergence rate of the sequence $\{y_k\}$ with $x_k=e^{[y_k]}$ is $R$-linear.

Notice that the sequence $\{y_k\}$ satisfies
$$
 \nabla f(y_k) +m e^{[my_{k+1}]}=0\quad\mbox{and}\quad
 \nabla f(\bar y) + m e ^{[m\bar y]}=0,
$$
which implies
\begin{eqnarray*}
\nabla L(y_k)&=&\nabla f(y_k)+ m e^{[my_k]}= - m\Big(e^{[my_{k+1}]}-e^{[my_k]}\Big)\\
      &=& -m^2\Big(\int _0^1 \mbox{diag}\Big(e^{[m(y_k +\tau(y_{k+1}-y_k))]} \Big) d\,\tau \Big)(y_{k+1}-y_k),
\end{eqnarray*}
and
\begin{eqnarray*}
&&\nabla L(y_k)^T(y_{k+1}-y_k)\\ &=& -m^2(y_{k+1}-y_k)^T\Big(\int _0^1 \mbox{diag}\Big(e^{[m(y_k +\tau(y_{k+1}-y_k))]} \Big) d\tau \Big)(y_{k+1}-y_k).
\end{eqnarray*}
The last equality together with Theorem \ref{th:positive-H} indicates that there are two constants $\gamma_1\ge\beta_1>0$ such that
\begin{equation}\label{est:grad-L}
\beta_1\| y_{k+1}-y_k\|\le \|\nabla L(y_k)\|\le \gamma_1 \|y_{k+1}-y_k\|,\quad \forall k\ge 0
\end{equation}
and
$$
\nabla L(y_k)^T(y_{k+1}-y_k) \le  -\gamma _1 \|y_{k+1}-y_k\|^2, \quad \forall k\ge 0.
$$

Since for any $k\ge 0$, $y_k$ satisfies $\displaystyle \sum_{i=1}^n e^{m(y_k)_i}=1$, we have $f(y_k)=L(y_k)$. By Taylor's expansion, there is a $\tilde y_k=y_k+\tilde{\theta}_k(y_{k+1}-y_k)$ with $\tilde{\theta} _k\in(0,1)$ such that
\begin{eqnarray}\label{Taylor-1}
f(y_{k+1}) &=&  L(y_{k+1}) \nonumber \\
  &=& L(y_k) +\nabla L(y_k)^T(y_{k+1}-y_k) +\frac 12(y_{k+1}-y_k)^T\nabla ^2L(\tilde y_k)(y_{k+1}-y_k)\nonumber \\
    &=& f(y_k) +\nabla L(y_k)^T(y_{k+1}-y_k)\nonumber \\
    &&+\frac 12(y_{k+1}-y_k)^T\Big [ \nabla ^2 f(\tilde y_k) + m^2 \mbox{diag}\Big(e ^{[m\tilde y_k]}\Big)\Big ](y_{k+1}-y_k)\nonumber\\
    &\le & f(y_k) +\nabla L(y_k)^T(y_{k+1}-y_k)
    +\frac 12 m^2(y_{k+1}-y_k)^T\mbox{diag}\Big(e ^{[m\tilde y_k]}\Big)(y_{k+1}-y_k)\nonumber \\
    &=&  f(y_k)-  m^2(y_{k+1}-y_k)^T\Big [\int _0^1 \mbox{diag}\Big(e^{[m(y_k +\tau(y_{k+1}-y_k))]} \Big) d\,\tau\nonumber \\
    &&-\frac 12 \mbox{diag}\Big(e ^{[m\tilde y_k]}\Big)\Big ](y_{k+1}-y_k),
\end{eqnarray}
where the inequality holds because $\nabla ^2f(y)$ is negative semi-definite for all $y$.

Since $\{y_k\}\to\bar y$ as $k\to\infty$, we claim by (\ref{Taylor-1}) that there is a constant $\beta _2>0$ such that
the inequality
\begin{equation}\label{linear-1}
f(y_{k+1}) -f(\bar y) \le f(y_k)-f(\bar y) - \beta _2\|y_{k+1}-y_k\|^2
\end{equation}
holds for all $k$ sufficiently large. Notice that $\nabla L(\bar y)=0$, and that both $y_k$ and $\bar y$ are feasible. Denote
$$
P_k=\left( \begin{array}{c} \displaystyle \int _0^1 \nabla ^2 L\Big(\bar y+\tau(y_k-\bar y)\Big)d\, \tau  \\
    \displaystyle \int _0^1 \Big(e^{[m(\bar y +\tau(y_k-\bar y))]}\Big)^T d\, \tau \end{array}\right).
$$
It is obvious that $P_k\to \bar Q$ as $k\to\infty$, where $\bar Q$ is defined in Theorem \ref{th:suff}(iii) and is of full rank.
Consequently, there is a constant $\beta_3>0$ such that the inequality
$$
\|P_k d  \|\ge \beta_3 \|d\|,\quad \forall d\in \mathbb R^n
$$
holds for all $k$ sufficiently large. By the mean-value theorem, we can obtain
\begin{eqnarray}\label{est:nabla-Ly}
\|\nabla L(y_k) \| &=& \left \|\; \left( \begin{array}{c} \nabla L(y_k)-\nabla L(\bar y)  \\ \frac{1}{m}\displaystyle \sum _{i=1}^n \Big(e^{m(y_k)_i}-e^{m\bar y_i}\Big)\end{array}\right)\; \right \| \nonumber \\
 &=&  \Big \| P_k(y_k-\bar y)\Big \|\ge \beta_3 \|y_k-\bar y\|,\nonumber
\end{eqnarray}
which together with (\ref{linear-1}) and (\ref{est:grad-L}) implies
\begin{equation}\label{linear-2}
f(y_{k+1}) -f(\bar y) \le f(y_k)-f(\bar y) - \beta _2\beta_3 ^2\gamma_1^{-2} \|y_k-\bar y\|^2.
\end{equation}

On the other hand, again, by Taylor's expansion, there is a $\bar y_k=\bar y+ \bar{\theta}_k(y_k-\bar y)$ with
$\bar{\theta}_k\in(0,1)$ such that
\begin{eqnarray}\label{linear-3}
f(y_k)=L(y_k) &=& L(\bar y)+\nabla L(\bar y)^T(y_k-\bar y) +\frac 12(y_k-\bar y)^T\nabla ^2L(\bar y_k)(y_k-\bar y)\nonumber \\
    &=& f(\bar y) +\frac 12(y_k-\bar y)^T\nabla ^2L(\bar y_k)(y_k-\bar y) .
\end{eqnarray}
Since $\{y_k\}$ is bounded, there must be a constant $\gamma_2>0$ such that
\begin{equation}\label{fk-f-1}
f(y_k)-f(\bar y)\le \gamma _2 \|y_k-\bar y\|^2.
\end{equation}

Now, we estimate a lower bound of $f(y_k)-f(\bar y)$. Denote $d_k=(y_k-\bar y)/\|y_k-\bar y\|$. Then, we have
$$
0= \sum _{i=1}^n \Big(e^{m(y_k)_i} - e ^{m(\bar y)_i}\Big) =  m\Big(e^{[m\check{ y}_k]} \Big) ^T(y_k-\bar y),
$$
where $\check{y}_k:=\bar{y}+\check{\theta}_k(y_k-\bar{y})$ with some $\check{\theta}_k\in(0,1)$, which yields
$$
 \Big(e^{[m\check y_k]} \Big) ^T d_k=0,\quad\forall k\geq0.
$$
Therefore, every accumulation point $\bar d$ of $\{d_k\}$ satisfies $ \Big(e^{[m\bar y]} \Big) ^T \bar d=0$. According to the second order sufficient condition(see Theorem \ref{th:suff}),
there exists a constant $\beta _4>0$  such that
$$
d_k^T\nabla ^2L(\bar y) d_k\ge \beta _4\|d_k\|^2=\beta _4
$$
holds for all $k$ sufficiently large. So there exists a $k_1$ such that
$$
d_k^T\nabla ^2L(\bar y_k) d_k\ge \frac  12 \beta _4,\quad\forall k\ge k_1.
$$
We claim from the last inequality and (\ref{linear-3}) that
$$
f(y_k)- f(\bar y)\ge \beta _4\|y_k-\bar y\|^2,\quad\forall k\ge k_1.
$$
The last inequality together with (\ref{fk-f-1}) yields
\begin{equation}\label{fk-f}
\beta _4\|y_k-\bar y\|^2\le f(y_k)-f(\bar y)\le \gamma _2 \|y_k-\bar y\|^2,\quad\forall k\ge k_1.
\end{equation}
The right-hand side inequality of (\ref{fk-f}) together with (\ref{linear-2}) implies that there exists a $k_0$ such that
$$
0< f(y_{k+1}) -f(\bar y) \le(1- \beta _2\beta_3 ^2\gamma_1^{-2}\gamma_2^{-1})\Big(f(y_k)-f(\bar y) \Big)\stackrel\triangle {=}\rho \Big(f(y_k)-f(\bar y) \Big)
$$
holds for all $k\ge k_0$, where $\rho = 1- \beta _2\beta_3 ^2\gamma_1^{-2}\gamma_2^{-1}\in(0,1)$. Denote $C =f(y_0)-f(\bar y)$.
Then, the last inequality implies
$$
0< f(y_{k+1}) -f(\bar y) \le C \rho ^{k+1},\quad\forall k\ge k_0.
$$
Let $\bar k:=\max\{k_0,k_1\}$. It follows from the last inequality and the left-hand side inequality of (\ref{fk-f}) that, there are two constants $\bar C>0$ and $\bar \rho \in(0,1)$ such that
$$
\|y_k-\bar y\|\le \bar C\bar\rho ^k,\quad \forall k\ge\bar k.
$$
This demonstrates the $R$-linear convergence of $\{y_k\}$.

The remainder of this section is devoted to the $Q$-linear convergence of the sequence $\{x_k\}$.
Denote
$$
\psi(x)= \frac {{\cal A}x^{m-1}\circ x}{{\cal A}x^m}= {\cal A}x^{m-1}\circ \frac {x}{\phi(x)}.
$$
By direct calculation, we have
\begin{eqnarray*}
\psi'(x) &=&  \mbox{diag}\Big({\cal A}x^{m-1} \Big) \Big(\frac {x}{\phi(x)}\Big)'
    +(m-1) \mbox{diag}\Big(\frac {x}{\phi(x)} \Big)  {\cal A}x^{m-2}\\
    &=&  \mbox{diag}\Big({\cal A}x^{m-1} \Big) \Big(\phi(x) ^{-1} I - \frac { x \nabla \phi(x)^T} {\phi(x)^2} \Big)
    +(m-1) \mbox{diag}\Big(\frac {x}{\phi(x)} \Big)  {\cal A}x^{m-2}.
\end{eqnarray*}
At the point of the eigenpair $(\bar x,\bar\lambda)$, it holds that
\begin{eqnarray*}
\psi'(\bar x) &=& \bar\lambda \mbox{diag}\Big(\bar x^{[m-1]} \Big) \Big(\bar \lambda ^{-1} I
    - m \bar\lambda ^{-1} \bar x \Big(\bar x^{[m-1]} \Big)^T \Big)
    +(m-1) \bar\lambda ^{-1} \mbox{diag}(\bar x)  {\cal A}\bar x^{m-2}\\
    &=&  \mbox{diag}\Big(\bar x^{[m-1]} \Big) - m  \bar x ^{[m]} \Big(\bar x^{[m-1]} \Big)^T
        +(m-1) \bar\lambda ^{-1} \mbox{diag}(\bar x)  {\cal A}\bar x^{m-2}\\
    &=& \bar\lambda ^{-1} \mbox{diag}(\bar x) \left [(m-1)  {\cal A}\bar x^{m-2} +\bar\lambda\mbox{diag}\left(\bar x^{[m-2]} \right)
        - m \bar\lambda \bar x ^{[m-1]} \Big(\bar x^{[m-1]} \Big)^T  \right ]\\
    &=& - \bar\lambda ^{-1} \mbox{diag}(\bar x) B(\bar x),
\end{eqnarray*}
where $B(x)$ is defined by (\ref{def:Bx}). Let $g(x):= \psi(x)^{[\frac 1m]}$ and
$$
E_k=\int _0^1 \Big [ g'(\bar x+\tau\,(x_k-\bar x))-g'(\bar x)\Big ] d\,\tau.
$$
Observing that $\psi(\bar x)=\bar x^{[m]}$, we derive from the mean-value theorem that
\begin{eqnarray}\label{error}
 x_{k+1}-\bar x &=& g(x_k)-g(\bar x)= g'(\bar x)(x_k-\bar x) + E_k(x_k-\bar x)\nonumber \\
    &=& \Big(\psi(\bar x)^{[\frac 1m]} \Big)' (x_k-\bar x) +  E_k(x_k-\bar x)\nonumber \\
    &=& \frac 1m \mbox{diag}\Big(\psi(\bar x)^{[\frac 1m-1]}\Big) \psi'(\bar x)(x_k-\bar x) +  E_k(x_k-\bar x)\nonumber \\ 
    &=&  \frac 1m \mbox{diag}\Big(\bar x^{[1-m]}\Big)   \psi'(\bar x)(x_k-\bar x) +  E_k(x_k-\bar x)\nonumber \\
    &=&  -\frac 1m   \bar\lambda ^{-1} \mbox{diag}\Big(\bar x^{[2-m]} \Big) B(\bar x)(x_k-\bar x) +  E_k(x_k-\bar x).
\end{eqnarray}
Multiplying both sides of the last equality by $\mbox{diag}\Big(\bar x^{[\frac{m-2}{2}]} \Big)$, we obtain
\begin{eqnarray}\label{error1}
	&&\mbox{diag}\Big(\bar x^{[\frac{m-2}{2}]} \Big)
\left(	x_{k+1}-\bar x \right) \nonumber \\
&=& \overline{T} \mbox{diag}\Big(\bar x^{[\frac{m-2}{2}]} \Big)(x_k-\bar x) +  {\overline E}_{k}\mbox{diag}\Big(\bar x^{[\frac{m-2}{2}]} \Big)(x_k-\bar x),
\end{eqnarray}
where
\begin{equation}\label{def:T}
	\overline T =   -\frac 1m   \bar\lambda ^{-1} \mbox{diag}\Big(\bar x^{[-\frac{m-2}{2}]} \Big) B(\bar x)  \mbox{diag}\Big(\bar x^{[-\frac{m-2}{2}]} \Big)
\end{equation}
and
$$
{\overline E}_{k} =\mbox{diag}\Big(\bar x^{[\frac{m-2}{2}]} \Big)E_k \mbox{diag}\Big(\bar x^{[-\frac{m-2}{2}]} \Big).
$$

The following theorem establishes the $Q$-linear convergence of the sequences $\{x_k\}$ and $\{\lambda_k\}$.
\begin{theorem}\label{th:Q-linear}
Let $\overline T$ be defined by (\ref{def:T}). The following statements are true.
\begin{description}
  \item [(i)] The spectral radius $\sigma(\overline T)$ of $\overline T$ satisfies $\sigma(\overline T)<1$.
  \item [(ii)] The sequence $\{x_k\}$ $Q$-linearly converges to $\bar x$. That is, there is a constant $\delta \in(0,1)$ such that
     $$
      \|x_{k+1}-\bar x\|_{\bar x} \le \delta \|x_k-\bar x\|_{\bar x}
     $$
     holds for all $k$ sufficiently large, where the vector norm $\|\cdot \|_{\bar x}$ is defined by
     $$
     \|p\|_{\bar x}^2= p^T\mbox{diag}\left(\bar x^{[m-2]} \right) p,\quad\forall p\in \mathbb R^n.
     $$
  \item [(iii)] The sequence of eigenvalue estimations $\{\lambda_k\}$ $Q$-linearly converges to $\bar \lambda$.
\end{description}
\end{theorem}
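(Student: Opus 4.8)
The plan is to establish the three items in sequence, with (i) doing the real work. The key step for (i) is to reduce $\overline T$ to a transparent closed form. Substituting the expression (\ref{bar:B}) for $B(\bar x)$ into the definition (\ref{def:T}) and carrying out the diagonal congruence by $\mbox{diag}(\bar x^{[-\frac{m-2}{2}]})$, I expect each of the three summands to collapse: the rank-one piece $m\bar\lambda\,\bar x^{[m-1]}(\bar x^{[m-1]})^T$ becomes $\bar z\bar z^T$ with $\bar z=\bar x^{[m/2]}$ as in (\ref{bar A}), the tensor contraction ${\cal A}\bar x^{m-2}$ becomes the matrix $\bar A$ of (\ref{bar A}), and the diagonal term $\bar\lambda\,\mbox{diag}(\bar x^{[m-2]})$ becomes $\bar\lambda I$. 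This should give
$$\overline T = -\bar z\bar z^T + \frac{m-1}{m\bar\lambda}\,\bar A + \frac{1}{m}I.$$

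From this form I would read the spectrum of $\overline T$ off that of $\bar A$. As $\bar A$ is symmetric, pick an orthonormal eigenbasis $v_1=\bar z,v_2,\dots,v_n$ with $\bar A v_j=\mu_j v_j$; here $\|\bar z\|^2=\sum_i\bar x_i^m=1$, so $\bar z$ is a unit Perron vector and $\mu_1=\bar\lambda$. Since $\bar z\bar z^T$ is the orthogonal projector onto $\mathrm{span}(\bar z)$, the vector $\bar z$ is an eigenvector of $\overline T$ with eigenvalue $-1+\frac{m-1}{m}+\frac{1}{m}=0$, whereas each $v_j$ with $j\ge2$ yields eigenvalue $\frac{m-1}{m}\cdot\frac{\mu_j}{\bar\lambda}+\frac{1}{m}$. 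It then remains to bound these $n-1$ numbers into $(-1,1)$. The upper bound is easy: Perron--Frobenius makes $\bar\lambda$ a \emph{simple} eigenvalue of the irreducible nonnegative symmetric $\bar A$, so $\mu_j<\bar\lambda$ for $j\ge2$ and each eigenvalue lies strictly below $\frac{m-1}{m}+\frac{1}{m}=1$. The lower bound is the one genuinely delicate point, and I expect it to be the main obstacle: $\bar A$ need not be primitive and may carry the eigenvalue $\mu_j=-\bar\lambda$, so $\mu_j/\bar\lambda=-1$ cannot be excluded. Even then, however, the corresponding eigenvalue of $\overline T$ equals $\frac{2-m}{m}$, and $\frac{2-m}{m}>-1\Leftrightarrow 2>0$ holds for every order $m$. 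Hence every eigenvalue of $\overline T$ lies strictly in $(-1,1)$ and $\sigma(\overline T)<1$, proving (i).

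For (ii) I would use that $\overline T$, being a diagonal congruence of the symmetric $B(\bar x)$, is itself symmetric, so its operator $2$-norm equals $\sigma(\overline T)<1$. Setting $w_k=\mbox{diag}(\bar x^{[\frac{m-2}{2}]})(x_k-\bar x)$, the recursion (\ref{error1}) becomes $w_{k+1}=(\overline T+\overline E_k)w_k$, and by the definition of the weighted norm one has $\|w_k\|=\|x_k-\bar x\|_{\bar x}$. Because $x_k\to\bar x$ by Theorem \ref{th:conv-H} and $g'$ is continuous, $\overline E_k\to0$; thus $\|\overline T+\overline E_k\|_2\le\sigma(\overline T)+\|\overline E_k\|_2$ falls below any fixed $\delta\in(\sigma(\overline T),1)$ once $k$ is large, which is exactly $\|x_{k+1}-\bar x\|_{\bar x}\le\delta\|x_k-\bar x\|_{\bar x}$.

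Finally, (iii) I would obtain from the $Q$-linear decay of the objective gap already in hand. Feasibility of $x_k$ gives $\phi(x_k)=\lambda_k$ and $f(y_k)=-\log\lambda_k$, so $e_k:=f(y_k)-f(\bar y)=\log(\bar\lambda/\lambda_k)>0$. The bound (\ref{linear-2}) together with the right inequality of (\ref{fk-f}) already yields $e_{k+1}\le\rho\,e_k$ with $\rho\in(0,1)$. Writing $\bar\lambda-\lambda_k=\bar\lambda(1-e^{-e_k})$ and using $1-e^{-t}=t(1+o(1))$ as $t\to0$, the ratio $(\bar\lambda-\lambda_{k+1})/(\bar\lambda-\lambda_k)$ is asymptotic to $e_{k+1}/e_k\le\rho$, hence stays below any fixed $\delta'\in(\rho,1)$ for large $k$. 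This gives the $Q$-linear convergence of $\{\lambda_k\}$ and finishes the proof.
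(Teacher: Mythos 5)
Your proposal is correct, and for parts (i) and (iii) it takes a genuinely different route from the paper; part (ii) essentially coincides with the paper's argument (your remark that $\overline T$ is symmetric, so $\|\overline T\|_2=\sigma(\overline T)$, makes explicit a step the paper uses silently). For (i), your algebra checks out: substituting (\ref{bar:B}) into (\ref{def:T}) and conjugating by $\mbox{diag}\big(\bar x^{[-\frac{m-2}{2}]}\big)$ indeed gives $\overline T=-\bar z\bar z^T+\frac{m-1}{m\bar\lambda}\bar A+\frac 1m I$ with $\bar A$, $\bar z$ as in (\ref{bar A}) and $\|\bar z\|=1$, so the spectrum of $\overline T$ is $\{0\}\cup\big\{\frac{m-1}{m}\frac{\mu_j}{\bar\lambda}+\frac 1m:\ j\ge 2\big\}$; the strict upper bound uses simplicity of the Perron root of the irreducible symmetric $\bar A$ (established in the proof of Theorem \ref{th:suff}(ii)), and the lower bound $\frac{2-m}{m}>-1$ handles the possible eigenvalue $-\bar\lambda$. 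The paper argues differently: it writes $\overline T=I-\frac 1m\bar\lambda^{-1}\mbox{diag}\big(\bar x^{[\frac{2-m}{2}]}\big)\overline W\mbox{diag}\big(\bar x^{[\frac{2-m}{2}]}\big)$ with $\overline W=\frac 1m\nabla_x^2L(\bar x,\bar\lambda)+m\bar\lambda\bar x^{[m-1]}\big(\bar x^{[m-1]}\big)^T$, proves $\overline W$ positive definite from the second-order sufficient condition of Theorem \ref{th:suff}, and uses $-B(\bar x)\succeq 0$ (concavity of $f$) to get eigenvalues in $[0,1)$. Your version is more explicit and shows the contraction factor is governed by the spectral gap of $\bar A$; the paper's version is tied to the optimization structure and yields the extra fact $\overline T\succeq 0$ (equivalently, $\mu_j\ge -\bar\lambda/(m-1)$, so your worst case $\mu_j=-\bar\lambda$ never actually occurs---your cruder bound still suffices). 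For (iii), the paper re-derives a contraction for $\bar\lambda-\lambda_k$ directly in $x$-space via a Taylor expansion of $L(x,\lambda)$, the estimates (\ref{est:nablaL}) and $\bar\lambda-\lambda_k\le C_1\|x_k-\bar x\|^2$; you instead reuse the geometric decay $e_{k+1}\le\rho e_k$ of the objective gap, already established before the theorem from (\ref{linear-2}) and (\ref{fk-f}), and transfer it through the exact identity $\lambda_k=\bar\lambda e^{-e_k}$, so $\bar\lambda-\lambda_k=\bar\lambda(1-e^{-e_k})\sim\bar\lambda e_k$. This is shorter and avoids duplicating work; the only care required is what you already note, namely $e_k>0$ until termination and the asymptotic ratio bound, e.g.\ $\frac{1-e^{-e_{k+1}}}{1-e^{-e_k}}\le\frac{\rho}{1-e_k/2}<\delta'<1$ for $k$ large. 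Both routes are sound.
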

\begin{proof}
(i) By Proposition \ref{prop:psd} and (\ref{Hession}), it is clear that the matrix $-B(\bar x)$ is positive semidefinite.
Since $\bar x>0$, by the definition of $\overline T$, it is easy to show that the eigenvalues of $\overline T$ are
nonnegative. It suffices to show that every eigenvalue is strictly less than one.

Let   $L(x,\lambda)$ be the Lagrangian function defined by (\ref{def:Lx}). Its Hessian is given by
$$
\nabla _x^2 L(x,\lambda)= -m(m-1){\cal A}x^{m-2} + m(m-1) \lambda  \mbox{diag}\Big(x^{[m-2]} \Big).
$$

By the definition of $B(x)$, we have
\begin{eqnarray*}\label{barT}
\overline T &=&   \frac 1m \bar\lambda^{-1}\mbox{diag}\Big(\bar x^{[\frac{2-m}{2}]} \Big)  \Big [(m-1)  {\cal A}\bar x^{m-2}
    +\bar\lambda\mbox{diag}\left(\bar x^{[m-2]} \right) \nonumber \\
    &&- m \bar\lambda \bar x ^{[m-1]} \Big(\bar x^{[m-1]} \Big)^T  \Big ] \mbox{diag}\Big(\bar x^{[\frac{2-m}{2}]} \Big)\nonumber \\
    &=& I - \frac 1m \bar\lambda^{-1}\mbox{diag}\Big(\bar x^{[\frac{2-m}{2}]} \Big)  \Big [
    	\frac 1m \nabla _x^2 L(\bar x, \bar \lambda) \nonumber \\
        &&+m \bar\lambda \bar x ^{[m-1]} \Big(\bar x^{[m-1]} \Big)^T  \Big ] \mbox{diag}\Big(\bar x^{[\frac{2-m}{2}]} \Big) \nonumber \\
    &\stackrel\triangle {=} & I - \frac 1m \bar\lambda^{-1}\mbox{diag}\Big(\bar x^{[\frac{2-m}{2}]} \Big) \overline W \mbox{diag}\Big(\bar x^{[\frac{2-m}{2}]} \Big) ,
\end{eqnarray*}
where
$$
\overline W=\frac 1m \nabla _x^2L(\bar x, \bar \lambda)  + m \bar \lambda \bar  x^{[m-1]} \Big(\bar x^{[m-1]} \Big)^T.
$$
To verify $\sigma(\overline T)<1$, it suffices to prove that every eigenvalue of $\overline W$ is positive, i.e.,  $\overline W$ is positive definite.

From item (ii) of \cref{th:suff}, we known that $\bar\lambda$ is the largest eigenvalue of the nonnegative irreducible matrix $\bar A$, where
$$
\bar A = \mbox{diag}\Big(\bar x^{[-\frac{m-2}{2}]} \Big) {\cal A}\bar x^{m-2} \mbox{diag}\Big(\bar x^{[-\frac{m-2}{2}]} \Big).
$$
Hence, $-\bar A + \bar \lambda I$ is positive semidefinite and
$$\nabla _x^2 L(\bar x, \bar \lambda) = m(m-1)\mbox{diag}\Big(\bar x^{\frac{[m - 2]}{2}} \Big) \left(-\bar A + \bar \lambda I\right) \mbox{diag}\Big(\bar x^{\frac{[m - 2]}{2}} \Big) $$
is also positive semidefinite.
The second order sufficient condition says that
$$
d^T\nabla _x^2L(\bar x, \bar \lambda) d>0,\quad\forall  d\neq 0:\; \Big(\bar x^{[m-1]} \Big)^Td=0.
$$
For any $d\in \mathbb{R}^{n}\backslash\{0\}$, if $d^T {\bar x}^{[m-1]} = 0$, then,
\begin{eqnarray*}
d^\top \overline  W d &=&  \frac 1m d^T \nabla _x^2 L(\bar x, \bar \lambda) d + m \bar \lambda \left(d^T {\bar x}^{[m-1]} \right)^2\\
			&=&\frac 1m d^T \nabla _x^2 L(\bar x, \bar \lambda) d > 0,
\end{eqnarray*}
and if $d^T {\bar x}^{[m-1]} \neq 0$, then,
\begin{eqnarray*}
	d^\top \overline  W d &=&  \frac 1m d^T \nabla _x^2 L(\bar x, \bar \lambda) d + m \bar \lambda \left(d^T {\bar x}^{[m-1]} \right)^2\\
	&> & m \bar \lambda \left(d^T \bar x^{[m-1]} \right)^2 > 0.
\end{eqnarray*}
Therefore, the matrix $\overline W$ is positive definite.

(ii) Since $\{\overline E_k\}\to 0$ as $k\rightarrow\infty$, there is a constant $\delta \in(0,1)$ such that
inequality $\sigma(\overline T)+\|\overline E_k\| \le \delta $. It follows from (\ref{error1})  that
\begin{eqnarray*}
\|x_{k+1}-\bar x\|_{\bar x} &=& \left \| \mbox{diag}\Big(\bar x^{[\frac {m-2}{2}]} \Big)(x_{k+1}-\bar x) \right \| \nonumber \\
    &\le&   \left \| \overline T \cdot  \mbox{diag}\Big(\bar x^{[\frac {m-2}{2}]}\Big)(x_k-\bar x) \right \|
    +\|\overline E_k\| \cdot\|x_k-\bar x\|_{\bar x}\\
    &\le & \left [ \sigma \Big(\overline T \Big) + \| \overline E_k\| \right ]\|x_k-\bar x\|_{\bar x}
     \le  \delta \|x_k-\bar x\|_{\bar x}.
\end{eqnarray*}
This indicates the validity of conclusion in item (ii).

(iii) Observe that for any $k$, $x_k\in \mathbb B_+$ is feasible. By Taylor's expansion, we get
\begin{eqnarray}\label{temp:th4.1}
-\lambda _{k+1} &=&  L(x_{k+1},\lambda _k)\nonumber \\
    &=& L(x_k,\lambda_k) +\nabla _xL(x_k,\lambda_k)^T s_k
    +\frac 12(x_{k+1}-x_k)^T\nabla _xL(\tilde x_k,\lambda_k)s_k \nonumber \\
    &=& -\lambda_k  +\nabla _xL(x_k,\lambda_k)^T s_k
    +\frac 12 s_k^T\nabla _x^2L(\tilde x_k,\lambda_k)s_k\nonumber \\
    &=&  -\lambda_k  +\nabla _xL(x_k,\lambda_k)^T s_k
    +\frac 12 s_k^T\nabla _x^2L(\bar x,\bar \lambda) s_k +o(\|s_k\|^2) ,
\end{eqnarray}
where $s_k= x_{k+1}-x_k$ and $\tilde x_k=x_k+\theta _k(x_{k+1}-x_k)$ with $\theta_k\in(0,1)$.

By the use of (\ref{gradient}), we get from the first equality in (\ref{KKT-H}),
\begin{eqnarray*}
0 &=& -m\frac {{\cal A}x_k^{m-1}\circ x_k}{{\cal A}x_k^m} + m x_{k+1}^{[m]}\\
    &=&  m\lambda_k ^{-1}\Big(-{\cal A}x_k ^{m-1}\circ x_k +\lambda_k \cdot x_k^{[m]} \Big) + m \Big(x_{k+1}^{[m]}-x_k^{[m]}\Big)\\
    &=& \lambda_k^{-1} \nabla _xL(x_k,\lambda_k)\circ x_k + m \Big(x_{k+1}^{[m]}-x_k^{[m]}\Big),
\end{eqnarray*}
which further indicates
$$
\nabla _xL(x_k,\lambda_k)=-m\lambda_k\mbox{diag}(x_k^{[-1]}) \Big(x_{k+1}^{[m]}-x_k^{[m]}\Big).
$$
Define $\frac {y}{x}\in \mathbb R^n_{++}$ for any $x,y\in \mathbb R^n_{++}$ with elements $(\frac {y}{x})_i=\frac {y_i}{x_i}$ for all $i\in[n]$. It is easy to see that
$$
x_{k+1}^{[m]}-x_k^{[m]}=x_k^{[m]} \circ \left( \frac {x_{k+1}^{[m]}}{x_k^{[m]}} - {\bf e} \right)
=\mbox{diag}\left(x_k^{[m-1]} \circ \sum_{j=0}^{m-1} \frac {x_{k+1}^{[j]}}{x_k^{[j]}} \right)\cdot s_k.
$$
Hence, we obtain
$$
\nabla _xL(x_k,\lambda_k)= -m\lambda_k \mbox{diag}\left(x_k^{[m-2]} \circ \sum_{j=0}^{m-1} \frac {x_{k+1}^{[j]}}{x_k^{[j]}} \right)\cdot s_k
\stackrel\triangle {=}-m\lambda_k M_k s_k,
$$
where
$$
M_k=\mbox{diag}\left(x_k^{[m-2]} \circ \sum_{j=0}^{m-1} \frac {x_{k+1}^{[j]}}{x_k^{[j]}} \right).
$$
Taking limits in both sides of the above equality, we obtain that
$$
\lim _{k\to \infty} M_k = m\cdot \mbox{diag}\Big(\bar x^{[m-2]}\Big).
$$
Consequently, there are two positive constants $C\ge c>0$ such that
\begin{equation}\label{est:nablaL}
c\|s_k\|\le \|\nabla _xL(x_k,\lambda_k)\|\le C\|s_k\|.
\end{equation}
Besides, it is also true that
\begin{eqnarray}\label{temp:th4.1a}
\nabla _xL(x_k,\lambda_k)^Ts_k &=& -m\lambda_k s_k^TM_ks_k \nonumber \\
    &=& -m^2\bar{\lambda} s_k^T \mbox{diag}\Big(\bar x^{[m-2]}\Big)s_k +o(\|s_k\|^2).
\end{eqnarray}

On the other hand, from (\ref{bar:B}) we can derive
$$
\nabla _x^2L(\bar x,\bar\lambda)= mB(\bar x) +m^2\bar\lambda\cdot \mbox{diag}\Big(\bar x^{[m-2]}\Big)
    -m^2\bar{\lambda}\bar x ^{[m-1]} \Big(\bar x ^{[m-1]} \Big)^T.
$$
It then follows from (\ref{temp:th4.1}) and (\ref{temp:th4.1a}) that
\begin{eqnarray*}
 -\lambda _{k+1} &=&  -\lambda_k  -m^2\bar{\lambda} s_k^T \mbox{diag}\Big(\bar x^{[m-2]}\Big)s_k +\frac 12m s_k^T \Big [ B(\bar x) \\
    && +m\bar\lambda\cdot \mbox{diag}\Big(\bar x^{[m-2]}\Big)  -m\bar\lambda \bar x ^{[m-1]} \Big(\bar x ^{[m-1]} \Big)^T\Big ]  s_k +o(\|s_k\|^2) \\
    &\le & -\lambda_k  -\frac 12 m^2\bar\lambda s_k^T \left [ \mbox{diag}\Big(\bar x^{[m-2]}\Big)
    + \bar x ^{[m-1]} \Big(\bar x ^{[m-1]} \Big)^T\right ]  s_k +o(\|s_k\|^2) ,
\end{eqnarray*}
where the last inequality holds due to the fact that $B(\bar x)$ is negative semi-definite. Since $\bar x>0$ and $\{s_k\}\to 0$ as $k\to \infty$, the last inequality shows  that there is a constant $\eta>0$ such that the inequality
$$
\bar\lambda - \lambda_{k+1}\le \bar\lambda -\lambda_k -\eta \|s_k\|^2\le  \bar\lambda -\lambda_k -\eta c^{-1} \|\nabla _xL(x_k,\lambda_k)\|^2
$$
holds for all $k$ sufficiently large, where the last inequality follows from (\ref{est:nablaL}).

Similar to the proof of (\ref{est:nabla-Ly}), it is not difficult to show that there is a constant $c_1>0$ such that
$$
\|\nabla _xL(x_k,\lambda_k)\| \ge c_1\|x_k-\bar x\|
$$
holds for all $k$ sufficiently large. Taking into account that
$x_k$ and $\bar x$ are feasible and $\nabla _xL(\bar x,\bar\lambda)=0$, by Taylor's expansion,
there is a $\bar x_k=\bar x+\bar\theta_k(x_k-\bar x)$ with $\bar\theta_k\in(0,1)$ such that
$$
0<\bar\lambda -\lambda_k= L(x_k,\bar\lambda)- L(\bar x,\bar\lambda)
=\frac 12(x_k-\bar x)^T\nabla _x^2L(\bar x_k,\bar\lambda)(x_k-\bar x) \le C_1\|x_k-\bar x\|^2
$$
holds for all $k$ with some constant $C_1>0$. As a result, we get
\begin{eqnarray*}
0 &< & \bar\lambda - \lambda_{k+1} \le  \bar\lambda -\lambda_k -\eta c^{-1} \|\nabla _xL(x_k,\lambda_k)\|^2 \\
    &\le &  \bar\lambda -\lambda_k -\eta c^{-1}c_1^2 \|x_k-\bar x\|^2\\
    &\le & \bar\lambda -\lambda_k -\eta c^{-1}c_1^2 C_1^{-1}(\bar\lambda-\lambda_k)
    = \rho(\bar\lambda -\lambda_k),
\end{eqnarray*}
where $\rho =1-\eta c^{-1}c_1^2 C_1^{-1} \in(0,1)$. The last inequality yields the $Q$-linear convergent of the sequence $\{\lambda_k\}$.

The proof is complete.
\end{proof}

\section{An improvement of the power-like method}\label{improv}
\setcounter{equation}{0}

In this section, we make an improvement to the power-like method. We first treat the issue of iteration for the method from a new view point. Denote
$$
\bar x^{[m]}= \frac {{\cal A}x_k^{m-1}\circ x_k}{{\cal A}x_k^m},\quad d_k= \bar x_k^{[m]}-x_k^{[m]}
$$
and
$$
x_k^{[m]}(\alpha)= x_k^{[m]} +\alpha d_k =\bar x_k^{[m]} +(\alpha -1) d_k.
$$
Obviously, $x_k(1)=\bar x_k$ is the next iterate determined by the power-like method. In other words, the point $x_{k+1}^{[m]}$ generated by the power-like method can be regarded as the point obtained by starting from $x_k^{[m]}$ along the direction $d_k$ with a unit steplength. Generally speaking, for an optimization method, a larger steplength would improve the efficiency of the method. In what follows, we introduce a line search technique in Algorithm \ref{alg1} to enlarge the steplength.

Rewritten $x_k^{[m]}(\alpha) = \alpha \bar{x}_k^{[m]} +(1-\alpha) x_k^{[m]}.$
Since both $x_k^{[m]}$ and $\bar x_k^{[m]}$ are positive, it is obvious that $x_k^{[m]}(\alpha)>0$ for all  $\alpha>1$ and sufficiently close to $1$. 
In this case, we let  $y_k(\alpha)=\log x_k(\alpha)$. 
It is not difficult to see that $(y_k(\alpha), \mu_{k}(\alpha))$ with $\mu_{k}(\alpha) = m$ is the unique solution of the following system
$$
\alpha \nabla f(y_k) +(\alpha -1) m e^{[my_k]} + \mu e^{[my]} =0\quad \mbox{and} \quad \sum_{i=1}^{m} e^{my_{i}} = 1,
$$
which is the KKT system of the problem
\begin{equation}\label{opt-yy}
\min _{y\in \tilde{\mathbb B}} \Big(\alpha \nabla f(y_k) +(\alpha -1) m e^{[my_k]} \Big)^T(y-y_k),
\end{equation}
where $
\tilde{\mathbb B}=
\{ y\in R^n\; |\; x=e^{[y]} \in \mathbb{B}_{+}\}.$
Hence, $y_k(\alpha)$ is a solution of the problem (\ref{opt-yy}).
In addition, $\bar y_k:=y_k(1)= \log x_k(1) =\log \bar x_k$ is the solution of the problem
$$
\min _{y\in \tilde{\mathbb B}} \nabla f(y_k)^T(y-y_k),
$$
and satisfies
$$
f(\bar y_k) \le f(y_k) + \nabla f(y_k)^T(\bar y_k-y_k)<f(y_k).
$$
As a result, with a given constant $\sigma \in(0,1)$, the following inequality
\begin{equation}\label {search-2}
f(y_k(\alpha_k)) \le f(y_k) +\sigma [\alpha_k\nabla f(y_k)+(\alpha_k-1)me^{[my_k]}]^T(y_k(\alpha_k)-y_k)
\end{equation}
holds for all $\alpha>1$ and sufficiently close to $1$.
Thus, a steplength $\alpha_k>1$ can be obtained by 
a back tracking process.
For some $\beta >0$ and $\rho \in(0,1)$, let $\alpha_k=1+\beta \rho ^i$, where $i=0,1,\ldots$, be the largest scalar such that $x_k^{[m]}(\alpha_k)>0$ and the condition (\ref{search-2}) is satisfied. 

With the above preparation,  we proposed an ``improved power-like method" in \cref{alg2}.
\begin{algorithm}[!tbph]
\caption{(Improved power-like method with a line search)}\label{alg2}
\begin{algorithmic}[1]
\STATE Given  parameters $\delta, \rho, \sigma \in(0,1)$ and an initial point $x_0\in\mathbb{B}_{+}$. Let $k:=0$.
\WHILE{$F(x_k)\neq 0$}

\STATE Compute $\bar x^{[m]}= \frac {{\cal A}x_k^{m-1}\circ x_k}{{\cal A}x_k^m}$ and $d_k= \bar x_k^{[m]}-x_k^{[m]}$. Let
\begin{equation}\label{iter-search}
x_k^{[m]}(\alpha)= \bar x_k^{[m]} +(\alpha -1) d_k,\quad
y_k(\alpha) =\log x_k(\alpha).
\end{equation}
\STATE Determine $\alpha_k$ be the largest element of 
$\{1+\beta \rho^{i}\mid i=0,1,\ldots\}$ satisfying the inequality 
\begin{equation}\label {search-1}
x_k^{[m]}(\alpha_k)\geq\delta \bar{x}_k^{[m]}.
\end{equation}
and the line search condition (\ref{search-2}).
\STATE Let $x_{k+1}:=x_k(\alpha_k)$ and $k:=k+1$.
\ENDWHILE
\end{algorithmic}
\end{algorithm}

\begin{remark}\label{imp mth}
{\rm(i)} It is easy to see that both  conditions (\ref{search-2}) and (\ref{search-1}) hold for all $\alpha_k$ closing to $1$. Consequently, Algorithm \ref {alg2} is well-defined. {\rm(ii)} The condition (\ref{search-2}) leads to the fact that the sequence $\{f(y_k)\}$ is descending. {\rm(iii)} The condition (\ref{search-1}) ensures that the sequence $\{x_k\}$ has a positive lower bound.
\end{remark}
The following theorem establishes the convergence of the improved power-like method. 
\begin{theorem}
The sequences $\{\lambda_k\}$ and $\{x_k\}$ generated by Algorithm \ref{alg2} converge to the spectral radius of $\cal A$ and its corresponding positive eigenvector, respectively.
\end{theorem}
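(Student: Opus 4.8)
The plan is to follow the architecture of the convergence proof for the basic power-like method (Theorem~\ref{th:conv-H}), replacing the unit-step descent estimate by the line-search inequality~(\ref{search-2}) and using Remark~\ref{imp mth} to recover the two structural facts that drove that proof: a monotone bounded eigenvalue sequence and a uniform positive lower bound on the iterates. First I would record that, since $y_k$ and $y_k(\alpha_k)$ are both feasible, one has $f(y_k)=-\log\lambda_k$, so the descent property in Remark~\ref{imp mth}(ii) makes $\{f(y_k)\}$ non-increasing and hence $\{\lambda_k\}$ non-decreasing; as $\{x_k\}\subset\mathbb B_+$ is bounded, $\{\lambda_k\}$ is bounded, so it increases to some $\bar\lambda>0$ and $\{f(y_k)\}$ converges. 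Next I would establish the positive lower bound: condition~(\ref{search-1}) gives $(x_{k+1})_i^m\ge \delta(\bar x_k)_i^m=\tfrac{\delta}{\lambda_k}({\cal A}x_k^{m-1})_i (x_k)_i\ge \tfrac{\delta}{\bar\lambda}({\cal A}x_k^{m-1})_i (x_k)_i$, which is, up to the harmless constant $\delta$, exactly the recursion exploited in Theorem~\ref{th:positive-H}(ii). The same telescoping estimate together with the irreducibility argument (to rule out a vanishing limit-index set) applies verbatim and shows $\{x_k\}$ stays bounded away from $0$, so $\{y_k\}$ is bounded.

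The heart of the proof is to show that the search direction collapses, i.e. $d_k=\bar x_k^{[m]}-x_k^{[m]}\to 0$. Summing~(\ref{search-2}) over $k$ and using that $\{f(y_k)\}$ converges, the nonpositive quantity $\big[\alpha_k\nabla f(y_k)+(\alpha_k-1)me^{[my_k]}\big]^T\big(y_k(\alpha_k)-y_k\big)$ must tend to $0$. The key computation I would highlight is the identity
\[
\alpha\nabla f(y_k)+(\alpha-1)me^{[my_k]}=-m\,e^{[m y_k(\alpha)]},
\]
which follows immediately from $\nabla f(y_k)=-m\bar x_k^{[m]}$ (see~(\ref{gradient})) and the definition $x_k^{[m]}(\alpha)=\alpha\bar x_k^{[m]}+(1-\alpha)x_k^{[m]}$ in~(\ref{iter-search}). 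Since both $x_k^{[m]}$ and $x_{k+1}^{[m]}$ are probability vectors (they sum to $1$ because $x_k,x_{k+1}\in\mathbb B_+$), substituting $y_{k+1}=y_k(\alpha_k)$ turns the descent quantity into minus the Kullback--Leibler divergence of $x_{k+1}^{[m]}$ from $x_k^{[m]}$. Hence this divergence tends to $0$, and Pinsker's inequality forces $\|x_{k+1}^{[m]}-x_k^{[m]}\|\to 0$; since $x_{k+1}^{[m]}-x_k^{[m]}=\alpha_k d_k$ with $\alpha_k>1$, we conclude $d_k\to 0$.

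With $d_k\to 0$ in hand the conclusion is immediate. Let $\bar x$ be any accumulation point of $\{x_k\}$; by the lower bound $\bar x>0$, and along the convergent subsequence ${\cal A}x_k^m\to{\cal A}\bar x^m=\bar\lambda$, so passing to the limit in $d_k=\frac{{\cal A}x_k^{m-1}\circ x_k}{{\cal A}x_k^m}-x_k^{[m]}$ gives ${\cal A}\bar x^{m-1}\circ\bar x=\bar\lambda\,\bar x^{[m]}$, i.e. ${\cal A}\bar x^{m-1}=\bar\lambda\,\bar x^{[m-1]}$. Thus $(\bar x,\bar\lambda)$ is a positive $H$-eigenpair, so by the Perron--Frobenius theorem (Theorem~\ref{PF}) and Theorem~\ref{th:kkt-H} it is \emph{the} unique positive eigenpair and $\bar\lambda=\rho({\cal A})$. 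Since every accumulation point of the bounded sequence $\{x_k\}$ equals this unique $\bar x$, the whole sequence converges to $\bar x$, and $\lambda_k={\cal A}x_k^m\to{\cal A}\bar x^m=\bar\lambda$.

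The step I expect to be the main obstacle is precisely forcing $d_k\to 0$ while the line search is allowed to extrapolate beyond the power-like step ($\alpha_k>1$): a direct Armijo/backtracking case analysis would be delicate, since $f(y_k(\alpha))$ need not be monotone in $\alpha$. The reformulation of the Armijo quantity as a Kullback--Leibler divergence is what makes this clean, as it controls $\|x_{k+1}^{[m]}-x_k^{[m]}\|$ (and hence $d_k$) by a single summable scalar. A secondary point requiring care is verifying that the lower bound produced by~(\ref{search-1}) is genuinely uniform in $k$, so that the irreducibility argument of Theorem~\ref{th:positive-H}(ii) can be invoked to guarantee that all accumulation points are strictly positive.
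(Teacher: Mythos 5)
Your proof is correct, and it takes a genuinely different route from the paper's. The paper reuses the architecture of Theorem \ref{th:conv-H}: after securing the positive lower bound from (\ref{search-1}) exactly as you do, it extracts subsequences $\{x_k\}_K\to\bar x$, $\{x_{k+1}\}_K\to\tilde x$ (and, implicitly, a convergent subsequence of the steplengths $\alpha_k\to\alpha$), passes to the limit in (\ref{search-2}) and in the KKT system of the subproblem (\ref{opt-yy}), concludes that $\tilde y$ and then $\bar y$ solve the limiting linearized problem, and divides the resulting stationarity condition by $\alpha>0$ to see that $\bar y$ is a KKT point of (\ref{H-opt}); Theorem \ref{th:kkt-H} then finishes. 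You instead prove the stronger intermediate fact $d_k\to0$: your identity $\alpha\nabla f(y_k)+(\alpha-1)me^{[my_k]}=-me^{[my_k(\alpha)]}$ (which is precisely what makes $y_k(\alpha)$ the solution of (\ref{opt-yy}), though the paper never exploits it quantitatively) turns the Armijo decrease into $\sigma\,\mathrm{KL}\bigl(x_{k+1}^{[m]}\,\|\,x_k^{[m]}\bigr)$; telescoping against the convergent sequence $\{f(y_k)\}$ makes these divergences summable, Pinsker gives $\|x_{k+1}^{[m]}-x_k^{[m]}\|\to0$, and $\alpha_k>1$ then gives $d_k\to0$, after which continuity and Perron--Frobenius identify every accumulation point and force full-sequence convergence. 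What your route buys: a quantitative, summable decrease estimate, a direct proof that the residual vanishes along the whole sequence, and no need for the KKT-limit argument or for extracting a limit of the steplengths. What the paper's route buys: brevity, since it is a near-verbatim replay of Theorem \ref{th:conv-H}, and no reliance on Pinsker or on the probability-simplex structure of $x_k^{[m]}$. Two details worth making explicit in a write-up of your argument: the fact that $x_{k+1}^{[m]}$ sums to one (needed for the KL interpretation) follows by induction because $\sum_i(\bar x_k^{[m]})_i=1$ and $x_{k+1}^{[m]}$ is an affine combination of $\bar x_k^{[m]}$ and $x_k^{[m]}$ with weights $\alpha_k$ and $1-\alpha_k$; and your reading of (\ref{search-2}) with the plus sign is the correct one --- the minus sign appearing in the corresponding display inside the paper's own proof is a typo.
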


\begin{proof}
By using (\ref{search-1}), we can prove in a way similar to \cref{th:positive-H} that the sequence $\{x_k\}$ has a positive lower bound.

Suppose that for an infinite set $K$, the subsequence $\{x_k\}_K$ converges to some $\bar x>0$ and the subsequence $\{x_{k+1}\}_K$ converges to some $\tilde x>0$. Accordingly, the subsequences $\{y_k\}_K$ and $\{y_{k+1}\}_K$ converge to some $\bar y=\log \bar x$ and $\tilde y=\log \tilde x$, respectively.
The line search condition (\ref{search-2}) means
$$
f(y_{k+1})-f(y_k)  \le \sigma[\alpha_k\nabla f(y_k)-(\alpha_k-1)me^{[my_k]}]^T(y_{k+1}-y_k)\le 0 .
$$
By taking limits in both sides of the last inequalities as $k\to\infty$ with $k\in K$, we obtain
$$
[\alpha\nabla f(\bar{y})-(\alpha-1)me^{[m\bar{y}]}]^T(\tilde{y}-\bar{y})=0,
$$
where $\lim_{k\in K,~k\rightarrow\infty}\alpha_k=\alpha$. Besides, taking limits in the KKT condition of system (\ref{opt-yy}), we can get
$$
\alpha\nabla f(\bar y) +(\alpha-1)m e^{[m\bar y]}+m e^{[m\tilde y]}=0\quad\mbox{and}\quad \sum _{i=1}^n e^{m\tilde y_i}=1.
$$
The last two equalities show that the point $\tilde y$ together with the  multiplier $\mu =m$ is the unique KKT point of the problem
$$\left\{\begin{array}{lll}
\min & [\alpha\nabla f(\bar y)+(\alpha-1)m e^{[m\bar y]}]^T(y-\bar y),  \\
\mbox{s.t.} & \displaystyle \sum _{i=1}^n e^{m y_i}=1.
\end{array}\right.
$$
Consequently, $\tilde y$ is a global solution of the above problem. However,  the fact
$$
[\alpha\nabla f(\bar{y})-(\alpha-1)me^{[m\bar{y}]}]^T(\tilde{y}-\bar{y})=0\quad\mbox{and}\quad \sum _{i=1}^n e^{m\bar y_i}=1
$$
shows that $\bar y$ is also a solution of the last problem, and hence, it together with $\mu=m$ is a KKT point of the last problem. In other words, $\bar y$ satisfies
$$
\nabla f(\bar y) + m e^{[m\bar y]}=0\quad\mbox{and}\quad \sum _{i=1}^n e^{m\bar y_i}=1,
$$
which means that $\bar y$ is a KKT point of the problem (\ref{H-opt}). By Theorem \ref{th:kkt-H}, $\bar x=e^{[\bar y]}$ is a positive eigenvector of $\cal A$ corresponding to the positive eigenvalue $\bar\lambda ={\cal A}\bar x^m$.
\end{proof}

\section{Numerical results}\label{numer}
\setcounter{equation}{0}

In this section, we  do some numerical experiments to test the efficiency of the proposed power-like method and
its improvement version. In the course of specific experiments, we add the idea of the well-known BB method for solving unconstrained optimization problems to the progress of line search. In detail, we choose the constant $\beta$ similar to the so-called BB step in the gradient method for unconstrained optimization.

Clearly, any eigenvector of $\cal A$ is a solution of the nonlinear equation
$$
F(x)=-{\cal A}x^{m-1}+{\cal A}x^m\cdot x^{[m-1]}=0.
$$
Let $\lambda_k={\cal A}x_k^{m}$. Then, we have
$$
d_k=\bar{x}_k^{[m]}-x_k^{[m]}=\frac{{\cal A}x_k^{m-1}\circ x_k}{\lambda_k}-x_k^{[m]}=-\lambda_k^{-1}D_kF(x_k),
$$
where $D_k=\mbox{diag}(x_k)$. We  rewrite the iteration scheme (\ref{iter-search}) as
\begin{equation}\label{iter-search2}
x_k^{[m]}(\alpha)=x_k^{[m]}-\alpha\lambda_k^{-1}D_kF(x_k).
\end{equation}
Considering our purpose is to find the positive eigenpair of the irreducible nonnegative tensor ${\cal A}$, by taking a variable transform $z=x^{[m]}$, we can get the following equivalent formulation to the last problems
$$
G(z):=F\Big(z^{[\frac 1m]}\Big) =  -{\cal A}\Big(z^{[\frac 1m]}\Big)^{m-1}+{\cal A}\Big(z^{[\frac 1m]}\Big)^m \cdot z^{[\frac {m-1}m]}=0.
$$
Thus, the iterative scheme (\ref{iter-search2}) can be further rewritten as
$$
z_{k+1}(\alpha)=z_k -\alpha\lambda _k ^{-1} D_k G(z_k) \quad \mbox{where} \quad D_k=\mbox{diag}(x_k)=\mbox{diag}\Big(z_k^{[\frac 1m]}\Big).
$$

In other words, the scheme (\ref{iter-search}) can be regarded as an iterative method for solving nonlinear equation $G(z)=0$. Now, we will choose the parameter $\beta$ in the process of line search in a way similar to the BB (or spectral) method. Here, we provide two strategies, which are
$$
\beta _{1k} =\lambda _k \frac {t_k^TD_ks_k}{\|D_kt_k\|^2}-1\quad\mbox{and}\quad
\beta _{2k} =\lambda _k \frac {t_k^Ts_k}{t_k^TD_kt_k}-1,
$$
where $s_k:=z_k-z_{k-1}$ and $t_k:=G(z_k)-G(z_{k-1})$. BB steps $\beta_{1k}$ and $\beta_{2k}$ are both bounded, and the convergence of Algorithm \ref{alg2} with $\beta=\beta_{1k}$ or $\beta_{2k}$ will remain unchanged.
In addition, if $\beta_{1k}$ and $\beta_{2k}$ are both negative, then, we will simply set $\beta=0$ or $\beta=1$.

We tested the performance of the proposed power-like method and its improved versions with $\beta=\beta_{1k}$ and $\beta_{2k}$.
The related methods are denoted by ``Improved method1" and ``Improved method2", respectively.
The numerical experiments were done with MATLAB R2022a on a personal laptop with Intel(R) Core(TM) CPU i7-10510U @1.80GHz and 16 GB memory running Microsoft Windows 11. While doing numerical experiments, the tensor toolbox \cite{TensorT} was employed to proceed tensor computation.

We compared our methods with the well-known higher-order power method in \cite{LP16}. In the process of experiments, instead of solving the original problem, we solve the scaled system $\bar{{\cal A}}x^{m-1}=\lambda x^{[m-1]}$, where $\bar{{\cal A}}=\frac{1}{a}{\cal A}$ and $a$ is the largest entry of ${\cal A}$. After finding the $H$-eigenpair $(\bar{\lambda}^*,x^*)$ of $\bar{{\cal A}}$, $(\lambda^*,x^*)$ with $\lambda^*=a\bar{\lambda}^*$ is the desired eigenpair for ${\cal A}$. For all test problems and any algorithms, the initial point $x_0\subset\mathbb{B}_+$ is randomly generated, and the termination criterion is set to
\begin{equation}\label{stop}
Res:=\|\bar{{\cal A}}x^{m-1}_k-\bar{{\cal A}}x^m_k\cdot x_k^{[m-1]}\|\leq10^{-8}.
\end{equation}
If the above stop condition is not met but the number of iterations has reached to 200, we will also stop the iterative process and regard this situation as failing to address the problem.

To show how algorithms behave, for each problem, we tested all the methods on each  problem with 100 different initial points. The testing indices including the average number of iterations denoted by ``iter", the average computing time in seconds denoted by ``Cpu", the average residual given by (\ref{stop}) denoted by ``Res", and the rate of successful terminations denoted by ``suc\%", will be used to indicate the efficiency of the algorithms.

First, we tested three irreducible nonnegative tensors which may not necessarily be symmetric.

\noindent{\bf Problem 1.}(\cite{CZ13}) Let tensor ${\cal A}\in\mathbb{S}^{[4,2]}$ be an irreducible nonnegative tensor with
$$
a_{1111}=a_{2222}=\frac{4}{\sqrt{3}},\quad a_{1112}=a_{1121}=a_{1211}=a_{2111}=1,
$$
$$
a_{1222}=a_{2122}=a_{2212}=a_{2221}=1
$$
and $a_{i_1i_2i_3i_4}=0$ elsewhere.

\noindent{\bf Problem 2.}(\cite{NQZ09}) Let tensor ${\cal A}=[A(1,:,:),A(2,:,:),A(3,:,:)]\in\mathbb{R}^{[3,3]}$
be irreducible nonnegative with 
$$
A(1,:,:)=\left(
\begin{array}{lll}
6.48 & 8.35 & 1.03\\
4.04 & 3.72 & 1.43\\
6.61 & 6.41 & 1.35
\end{array}
\right),~~
A(2,:,:)=\left(
\begin{array}{lll}
9.02 & 0.78 & 6.90\\
9.70 & 4.79 & 1.85\\
2.09 & 4.17 & 2.98
\end{array}
\right),
$$
and
$$
A(3,:,:)=\left(
\begin{array}{lll}
9.55 & 1.57 & 6.89\\
5.63 & 5.55 & 1.45\\
5.65 & 8.29 & 6.22
\end{array}
\right).
$$

\noindent{\bf Problem 3.}(\cite{CZ13}) Let ${\cal A}\in\mathbb{R}^{[4,2]}$ be an irreducible nonnegative tensor with
$$
a_{1112}=30,~~a_{1212}=1,~~a_{1222}=1,~~a_{2111}=6,~~a_{2112}=13,~~a_{2122}=37,
$$
and $a_{i_1i_2i_3i_4}=0$ elsewhere.

Tables \ref{tab1}-\ref{tab3} show the average performance of the tested methods on the above three problems.
\begin{table}[!htbp]
	\caption{Numerical results for Problem 1}\label{tab1}
	{\scriptsize
		\def\temptablewidth{1\textwidth}
		\begin{tabular*}{\temptablewidth}{@{\extracolsep{\fill}}cccccccc}\toprule
			Alg. & \textsf{$\bar{\lambda^*}$} & \textsf{iter}& \textsf{Cpu} & \textsf{Res}& \textsf{suc\%} \\ \midrule
			Power method & 2.73205 &  20 & 0.00168 & 7.15e-09 & 100 \\
            \hline
            Power-like method & 2.73205 &  30 & 0.00255 & 7.32e-09 & 100 \\
            \hline
            Improved method1 & 2.73205 & 6.2 & 0.00065 & 1.1e-09 & 100 \\
            \hline
            Improved method2 & 2.73205 & 6.3 & 0.00067 & 4.32e-10 & 100 \\
			\bottomrule
	\end{tabular*}}
\end{table}

\begin{table}[!htbp]
	\caption{Numerical results for Problem 2}\label{tab2}
	{\scriptsize
		\def\temptablewidth{1\textwidth}
		\begin{tabular*}{\temptablewidth}{@{\extracolsep{\fill}}cccccccc}\toprule
			Alg. & \textsf{$\bar{\lambda^*}$} & \textsf{iter}& \textsf{Cpu} & \textsf{Res}& \textsf{suc\%} \\ \midrule
			Power method & 4.45951 & 9.8 & 0.00072 & 4.54e-09 & 100 \\
            \hline
            Power-like method & 4.45951 &  23 & 0.00152 & 6.19e-09 & 100 \\
            \hline
            Improved method1 & 4.45951 & 9.1 & 0.00076 & 2.3e-09 & 100 \\
            \hline
            Improved method2 & 4.45951 & 9.2 & 0.00078 & 2.51e-09 & 100 \\
			\bottomrule
	\end{tabular*}}
\end{table}

\begin{table}[!htbp]
	\caption{Numerical results for Problem 3}\label{tab3}
	{\scriptsize
		\def\temptablewidth{1\textwidth}
		\begin{tabular*}{\temptablewidth}{@{\extracolsep{\fill}}cccccccc}\toprule
			Alg. & \textsf{$\bar{\lambda^*}$} & \textsf{iter}& \textsf{Cpu} & \textsf{Res}& \textsf{suc\%} \\ \midrule
			Power method & 1.10824 &  11 & 0.00108 & 4.11e-09 & 100 \\
            \hline
            Power-like method & 1.10824 &  20 & 0.00174 & 6.16e-09 & 100 \\
            \hline
            Improved method1 & 1.10824 & 7.4 & 0.00082 & 4.65e-10 & 100 \\
            \hline
            Improved method2 & 1.10824 & 7.4 & 0.00084 & 3.7e-10 & 100 \\		
			\bottomrule
	\end{tabular*}}
\end{table}

From Tables \ref{tab1}-\ref{tab3} we can see the following features.
\begin{itemize}
  \item For Problems 1-3, starting from any initial point in the feasible set, our proposed power-like method and its improved versions can always find the spectral radius of the involved tensor whatever it is symmetric or asymmetric.
  \item In terms of computation time and iterations, the Improved method1 and Improved method2 generally need less iteration numbers as well as computation time compared with the power method. Especially for Problem 1 where the involved tensor is irreducible nonnegative symmetric, the average number of iterations used by the Improved methods 1 and 2 are almost one-third of that of the power method.
\end{itemize}

\noindent{\bf Problem 4.}(\cite{CPZ11}) Let ${\cal A}\in\mathbb{R}^{[3,3]}$ be an irreducible nonnegative tensor with elements
$$
a_{111}=1,~~a_{133}=1,~~a_{211}=1,~~a_{311}=1
$$
and $a_{i_1i_2i_3}=0$ elsewhere.

The results of the tested methods are shown in Table \ref{tab4}.
\begin{table}[!htbp]
\caption{Numerical results for Problem 4}\label{tab4}
{\scriptsize
\def\temptablewidth{1\textwidth}
\begin{tabular*}{\temptablewidth}{@{\extracolsep{\fill}}cccccccc}\toprule
Alg. & \textsf{$\bar{\lambda^*}$} & \textsf{iter}& \textsf{Cpu} & \textsf{Res}& \textsf{suc\%} \\ \midrule
    Power method & - & - &    - & - &  0 \\
    \hline
    Power-like method & 1.41421 &  19 & 0.00084 & 6.05e-09 & 100 \\
    \hline
    Improved method1 & 1.41421 &  19 & 0.00098 & 5.99e-09 & 100 \\
    \hline
    Improved method2 & 1.41421 &  19 & 0.00103 & 6.08e-09 & 100 \\
 \bottomrule
\end{tabular*}}
\end{table}

We can see from Table \ref{tab4} that, the proposed methods can always find the spectral radius of the involved
tensors while the power method failed to find its $H$-eigenpairs within 200 iterations. It is interesting to note
that for this problem, the BB steps were always unavailable and  $\alpha_k=1$ for all $k$. As a result,
the power-like method and all its improved versions behave the same way.

At last, we tested the methods on some high-dimensional problems.

\noindent{\bf Problem 5.}(\cite{CHHZ19}) Let ${\cal B}\in\mathbb{R}^{[m,n]}$ be a nonnegative
tensor whose entries are random numbers uniformly distributed in $[0,1]$. Let ${\cal A}={\cal B}+\delta{\cal I}$,
where ${\cal I}$ is the identity tensor and $\delta>0$ is a parameter. 

\noindent{\bf Problem 6.}(\cite{ZNG20}) The entries of the irreducible nonnegative tensor ${\cal A}\in\mathbb{S}^{[m,n]}$ are
$$
a_{i_ii_2\cdots i_m}=|\tan(i_1)+\tan(i_2)+\cdots+\tan(i_m)|,\quad \forall i_1,\ldots,i_m\in[n].
$$

We tested these four methods on the above two problems with different order $m$'s, dimension $n$'s and parameter $\delta$'s. Tables \ref{tab5} and \ref{tab6} list the average performance of these methods.
\begin{center}
	\begin{table}[!htbp]
		\caption{Numerical results for Problem 5}\label{tab5}
		\setlength\tabcolsep{2pt}
		{\scriptsize
			\def\temptablewidth{1\textwidth}
			\begin{tabular*}{\temptablewidth}{@{\extracolsep{\fill}}c|cccccccccc}\toprule
				& & Power method && Power-like method && Improved method1 && Improved method2 \\
				$(m,n,\delta)$ && \textsf{iter} / \textsf{Cpu}/ \textsf{suc\%} && \textsf{iter} / \textsf{Cpu} / \textsf{suc\%} && \textsf{iter} / \textsf{Cpu}/ \textsf{suc\%} && \textsf{iter} / \textsf{Cpu}/ \textsf{suc\%} \\\midrule
				(3, 20, $10^{2}$) && 18.8 / 0.00095 / 100 && 33.8 / 0.00166 / 100 && 8.8 / 0.00056 / 100 && 8.9 / 0.00055 / 100\\
				(3, 20, $10^{4}$) && - /    - /  0 && - /    - /  0 && 33.6 / 0.00207 / 100 && 38.9 / 0.00234 / 100\\
				(3, 50, $10^{2}$) && 9.2 / 0.00080 / 100 && 23.0 / 0.00204 / 100 && 8.0 / 0.00088 / 100 && 8.0 / 0.00085 / 100\\
				(3, 50, $10^{4}$) && - /    - /  0 && - /    - /  0 && 12.4 / 0.00165 / 100 && 13.0 / 0.00180 / 100\\
				(4, 20, $10^{3}$) && 13.9 / 0.00168 / 100 && 23.0 / 0.00260 / 100 && 8.2 / 0.00109 / 100 && 8.3 / 0.00112 / 100\\
				(4, 20, $10^{5}$) && - /    - /  0 && - /    - /  0 && 29.1 / 0.00378 / 100 && 29.3 / 0.00378 / 100\\
				(4, 50, $10^{3}$) && 7.0 / 0.02210 / 100 && 18.2 / 0.05687 / 100 && 7.6 / 0.02396 / 100 && 7.6 / 0.02418 / 100\\
				(4, 50, $10^{5}$) && 37.4 / 0.11667 / 100 && 52.9 / 0.16577 / 100 && 10.1 / 0.03209 / 100 && 10.2 / 0.03216 / 100\\
				\bottomrule
		\end{tabular*}}
	\end{table}
\end{center}
\begin{center}
	\begin{table}[!htbp]
		\caption{Numerical results for Problem 6}\label{tab6}
        \setlength\tabcolsep{2pt}
		{\scriptsize
			\def\temptablewidth{1\textwidth}
			\begin{tabular*}{\temptablewidth}{@{\extracolsep{\fill}}c|cccccccccc}\toprule
				& & Power method && Power-like method && Improved method1 && Improved method2 \\
				$(m,n)$ && \textsf{iter} / \textsf{Cpu}/ \textsf{suc\%} && \textsf{iter} / \textsf{Cpu} / \textsf{suc\%} && \textsf{iter} / \textsf{Cpu}/ \textsf{suc\%} && \textsf{iter} / \textsf{Cpu}/ \textsf{suc\%} \\\midrule
				(3, 100) && 23.3 / 0.0061 / 100 && 23.6 / 0.0060 / 100 && 12.7 / 0.0037 / 100 && 12.2 / 0.0036 / 100\\
                (3, 200) && 23.3 / 0.0812 / 100 && 24.5 / 0.0860 / 100 && 12.8 / 0.0458 / 100 && 12.8 / 0.0458 / 100\\
                (3, 300) && 23.0 / 0.2802 / 100 && 25.5 / 0.3113 / 100 && 13.4 / 0.1657 / 100 && 13.1 / 0.1605 / 100\\
                (4, 30) && 18.8 / 0.0064 / 100 && 20.0 / 0.0066 / 100 && 11.4 / 0.0039 / 100 && 11.7 / 0.0041 / 100\\
                (4, 60) && 19.1 / 0.1474 / 100 && 21.2 / 0.1627 / 100 && 12.0 / 0.0922 / 100 && 12.3 / 0.0950 / 100\\
                (5, 20) && 16.4 / 0.0356 / 100 && 19.0 / 0.0408 / 100 && 10.9 / 0.0242 / 100 && 11.1 / 0.0246 / 100\\
                (5, 40) && 17.6 / 1.2636 / 100 && 20.1 / 1.4419 / 100 && 11.7 / 0.8337 / 100 && 11.8 / 0.8424 / 100\\
				\bottomrule
		\end{tabular*}}
	\end{table}
\end{center}

From Tables \ref{tab5} and \ref{tab6} we can see the following features.
\begin{itemize}
  \item For Problem 5, when the parameter $\delta$ is relatively small, all the methods performed well. However, with the increasing of $\delta$, the Improved methods 1 and 2 are getting more efficiency than the power method and the power-like method. Moreover, the latter two methods may fail to find the largest $H$-eigenpair within 200 iterations.
  \item For Problem 6, all the tested methods can always find the largest $H$-eigenpair of the tensors successfully. The improved methods generally needed less iteration numbers as well as computation time than the power method did.
\end{itemize}

For clarity,  we also plotted a figure, namely Figure \ref{fig1}, to show the performance of the tested four methods on two cases of Problems 5 and 6.
\begin{figure}[!htbp]
\center{\includegraphics[width=0.45\textwidth]{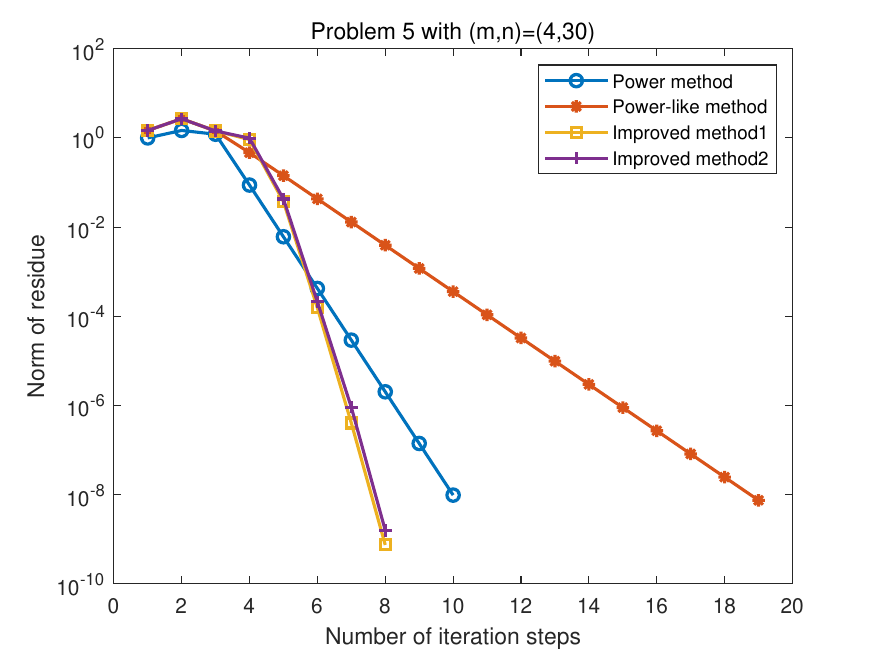} \ \
\includegraphics[width=0.45\textwidth]{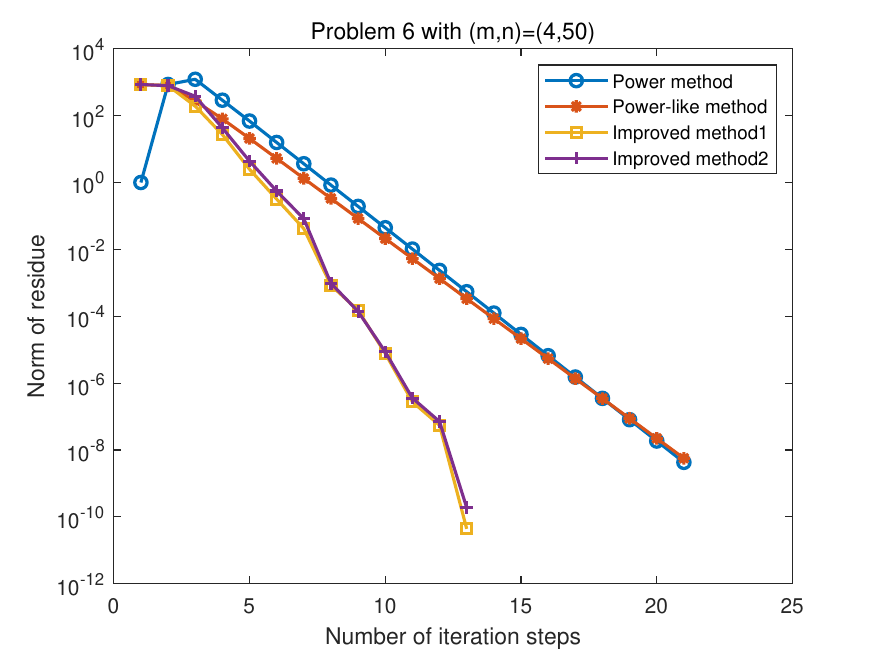}}
\caption{Evolutions of residue with respect to the number of iteration steps}
\label{fig1}
\end{figure}

At the end of this section, we created a table, i.e., Table \ref{tab7}, to show the comparison in the average iteration numbers as well as the CPU time used among our proposed improved methods and the power method. The meaning of each column is given by
$$
\mbox{I1}:=\frac{\mbox{iterations of the IM1}}{\mbox{iterations of the PM}}\times 100\%,\quad
\mbox{I2}:=\frac{\mbox{iterations of the IM2}}{\mbox{iterations of the PM}}\times 100\%,
$$
and
$$
\mbox{T1}:=\frac{\mbox{Cpu time for the IM1}}{\mbox{Cpu time for the PM}}\times 100\%,\quad
\mbox{T2}:=\frac{\mbox{Cpu time for the IM2}}{\mbox{Cpu time for the PM}}\times 100\%,
$$
where ``IM1(2)" represents for improved method1(2) and ``PM" represents for the power method.
\begin{table}[!htbp]
	\caption{Comparison between the improved methods and the power method}\label{tab7}
	{\scriptsize
		\def\temptablewidth{1\textwidth}
		\begin{tabular*}{\temptablewidth}{@{\extracolsep{\fill}}cccccccc}\toprule
			Problem            & I1 & I2 & T1 & T2 \\ \midrule
			Problem 1 & 31.9\%  & 32.1\%  & 38.5\% &  40.2\% \\
			Problem 2 & 92.9\%  &  93\%  & 104\% &  107\% \\
			Problem 3 &  64.6\%  & 64.8\%  & 76.3\% &  78.3\% \\
			Problem 6(3, 100) & 54.4\%  & 52.2\%  & 60.8\% &  59.7\% \\
			Problem 6(3, 200) & 54.9\%  & 54.9\%  & 56.5\% &  56.3\% \\
			Problem 6(3, 300) & 58.4\%  & 56.7\%  & 59.1\% &  57.3\% \\
			Problem 6(4, 30) & 60.4\%  & 62.2\%  & 61.1\% &  64.8\% \\
			Problem 6(4, 60) & 63.0\%  & 64.7\%  & 62.6\% &  64.4\% \\
			Problem 6(5, 20) & 66.7\%  & 67.7\%  & 67.9\% &  69.0\% \\
			Problem 6(5, 40) & 66.1\%  & 66.7\%  & 66.0\% &  66.7\% \\
			\bottomrule
	\end{tabular*}}
\end{table}

\section{Conclusions}\label{conc}

In this paper, we  focused on the numerical algorithm of finding the spectral radius of a nonnegative irreducible symmetric tensor.
By transferring the eigenvalue system into an equivalent DC programming, we designed a cheaper and efficient first-order method called power-like method to find the spectral radius and its corresponding eigenvector of a nonnegative irreducible tensor. The generated sequence of eigenvalue estimates is monotonically increasing and converges to the spectral radius of the tensor. Moreover, both the eigenvalue sequence and the eigenvector sequence are $Q$-linearly convergent. To the best of our knowledge, up to now, there is no other first-order method where the generated sequence of eigenvector estimates can achieve $Q$-linear convergence rate. To improve the efficiency of the power-like method, we introduced a line search technique, and the improved methods possess the same convergence properties as their original version. Furthermore, as observed in numerical experiments, BB step is a pretty good alternative for the initial steplength of the improved methods.

\bibliographystyle{siam}
\bibliography{references}
\end{document}